\let\origsection=\section \def\section{\@ifstar{\origsection*}{\mysection}} 
\def\mysection{\@startsection{section}{1}\z@{.7\linespacing\@plus\linespacing}{.5\linespacing}{\normalfont\scshape\centering\S}}
\renewcommand{\PrintDOI}[1]{\doi{#1}}
\numberwithin{equation}{section}
\numberwithin{figure}{section}
\let\polishlcross=\l
\def\l{\ifmmode\ell\else\polishlcross\fi}
\def\paragraph#1{%
  \noindent\textbf{#1.}\enspace}
\let\emptyset=\varnothing
\let\setminus=\smallsetminus
\def\moverlay{\mathpalette\mov@rlay}
\def\mov@rlay#1#2{\leavevmode\vtop{   \baselineskip\z@skip \lineskiplimit-\maxdimen
   \ialign{\hfil$\m@th#1##$\hfil\cr#2\crcr}}}
\newcommand{\charfusion}[3][\mathord]{
    #1{\ifx#1\mathop\vphantom{#2}\fi
        \mathpalette\mov@rlay{#2\cr#3}
      }
    \ifx#1\mathop\expandafter\displaylimits\fi}
\DeclareFontFamily{U}  {MnSymbolC}{}
\DeclareSymbolFont{MnSyC}         {U}  {MnSymbolC}{m}{n}
\DeclareFontShape{U}{MnSymbolC}{m}{n}{
    <-6>  MnSymbolC5
   <6-7>  MnSymbolC6
   <7-8>  MnSymbolC7
   <8-9>  MnSymbolC8
   <9-10> MnSymbolC9
  <10-12> MnSymbolC10
  <12->   MnSymbolC12}{}
\DeclareMathSymbol{\powerset}{\mathord}{MnSyC}{180}
\let\epsilon=\varepsilon
\let\rho=\varrho
\let\theta=\vartheta
\theoremstyle{plain}
\newtheorem{thm}{Theorem}[section]
\newtheorem{theorem}[thm]{Theorem}
\newtheorem{lemma}[thm]{Lemma}
\newtheorem{abccase}{Case}
\newtheorem{case}{Case}
\newtheorem{subcase}{Subcase}[case]
\newtheorem{subsubcase}{Subcase}[subcase]
\newtheorem{proposition}[thm]{Proposition}
\newtheorem{thm-intro}{Theorem}[]
\newtheorem*{claim*}{Claim}
\theoremstyle{definition}
\newtheorem{definition}[thm]{Definition}
\newtheorem{example}[thm]{Example}
\let\phi=\varphi
\newcommand{\no}[1]{}
\def\?#1{\vadjust{\vbox to 0pt{\vss\vskip-8pt\leftline{%
     \llap{\hbox{\vbox{\pretolerance=-1
     \doublehyphendemerits=0\finalhyphendemerits=0
     \hsize16truemm\tolerance=10000\small
     \lineskip=0pt\lineskiplimit=0pt
     \rightskip=0pt plus16truemm\baselineskip8pt\noindent
     \hskip0pt        
     {\tiny #1 }\endgraf}\hskip7truemm}}}\vss}}}
\begin{document}

\author[K.~Heuer]{Karl Heuer}
\address{Karl Heuer, Institute of Software Engineering and Theoretical Computer Science, Technische Universit\"{a}t Berlin, Ernst-Reuter-Platz 7, 10587 Berlin, Germany}
\email{\tt karl.heuer@tu-berlin.de}

\author[D.~Sarikaya]{Deniz Sarikaya}
\address{Deniz Sarikaya, Department of Mathematics, University of Hamburg, Bundesstra{\ss}e 55, 20146 Hamburg, Germany}
\email{\tt deniz.sarikaya@uni-hamburg.de}

\title[]{Forcing Hamiltonicity in locally finite graphs via forbidden induced subgraphs II: paws}
\subjclass[2010]{05C63, 05C45}
\keywords{Hamiltonicity, forbidden induced subgraphs, locally finite graphs, ends of infinite graphs, Freudenthal compactification.}

\begin{abstract}
In this paper we extend a result about a sufficient condition for Hamiltonicity for finite graphs by Broersma and Veldmann to locally finite graphs.
In order to do this we use topological circles within the Freudenthal compactification of a locally finite graph as infinite cycles.
The condition we focus on in this paper is in terms of forbidden induced subgraphs, namely being claw-free and a relaxation of being paw-free.
\end{abstract}

\maketitle

\section{Introduction}\label{sec:Introduction}

In this second paper out of a series we extend another sufficient condition for Hamiltonicity in finite graphs to locally finite ones.
For this we consider, given a locally finite connected graph $G$, the topological space $|G|$~\cites{Diestel.Buch, Diestel.Arx}, known as the Freudenthal compactification of $G$.
Beside the graph $G$, seen as a $1$-complex, the space $|G|$ also contains additional points, namely the \emph{ends} of $G$, which are equivalence classes of one-way infinite paths of $G$ under the relation of being inseparable by finitely many vertices.
Following the topological approach from~\cites{inf-cyc-1, inf-cyc-2}, we use \emph{circles}, i.e. homeomorphic images of the unit circle $S^1 \subseteq \mathbb{R}^2$ in $|G|$, to extend the notion of cycles and allowing infinite ones.
Then we call $G$ \emph{Hamiltonian} if there is a circle in $|G|$ containing all vertices of $G$.

This series of articles focuses on extending certain local conditions that guarantee the existence of a Hamilton cycle in finite graphs, namely such in terms of forbidden induced subgraphs.
In this paper we focus on a condition involving precisely two graphs: the \emph{claw}, i.e. $K_{1, 3}$, and the \emph{paw}, which is the graph obtained from a triangle and an additional vertex which is adjacent to precisely one vertex of the triangle (cf.~Figure~\ref{paw}).
For the rest of this paper, we shall denote the vertex of degree $1$ in a paw by $a_1$ and those two vertices non-adjacent to $a_1$ by $b_1$ and $b_2$.
The remaining vertex of a paw will always be called $a_0$, as depicted in~Figure~\ref{paw}.

\begin{figure}[htbp]
\centering
\includegraphics[width=8cm]{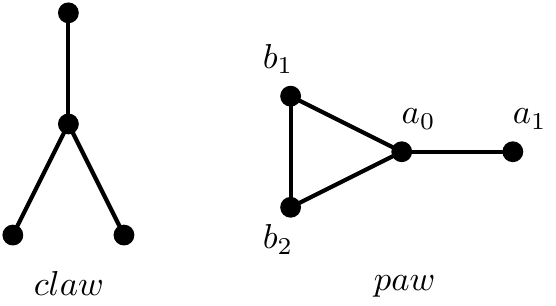} 
\caption{The subgraphs we focus on in this paper.}
\label{paw}
\end{figure}

The following theorem is probably the first Hamiltonicity result for finite graphs in terms of forbidden induced subgraphs.
Note that, given any two graphs $G$ and $H,$ we call $G$ a \emph{$H$-free} graph if $G$ does not contain any induced subgraph isomorphic to $H$.

\begin{theorem}\cite{paw-free}*{Thm.\ 4}\label{thm:paw-free}
Every finite $2$-connected claw-free and paw-free graph is Hamiltonian.
\end{theorem}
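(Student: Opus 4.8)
The plan is to exploit the strong structural restrictions that the two forbidden subgraphs together impose, rather than to argue with a longest cycle directly. The key external input I would invoke is Olariu's structure theorem for paw-free graphs: every connected paw-free graph is either triangle-free or complete multipartite. Since a $2$-connected graph is in particular connected, this lets me split the argument into exactly two cases, and in each of them the claw-freeness will pin the graph down almost completely.

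In the triangle-free case, I would first observe that claw-freeness forces the maximum degree to be at most $2$: if some vertex $v$ had three neighbours $x, y, z$, then triangle-freeness would make $x, y, z$ pairwise non-adjacent, so $\{v, x, y, z\}$ would induce a claw. A connected graph of maximum degree at most $2$ is a path or a cycle, and $2$-connectedness (together with the graph having at least three vertices) rules out the path. Hence $G$ is itself a cycle, which is already a Hamilton cycle, so there is nothing more to prove in this branch.

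In the complete multipartite case, write $G = K_{n_1, \dots, n_k}$ with parts $V_1, \dots, V_k$. Being $2$-connected, $G$ has an edge, so $k \ge 2$. I would then show that claw-freeness forces every part to have size at most $2$: if some $\lvert V_i \rvert \ge 3$, pick three vertices of $V_i$ together with any vertex from another part; the three vertices are mutually non-adjacent while the outside vertex is adjacent to all of them, giving an induced claw. With all parts of size at most $2$ and $k \ge 2$, I would finish by invoking (or re-deriving) the classical Hamiltonicity criterion for complete multipartite graphs, namely that $K_{n_1, \dots, n_k}$ with $n_1 \ge \dots \ge n_k$ is Hamiltonian precisely when $n_1 \le n_2 + \dots + n_k$, and then check that $2$-connectedness guarantees this inequality: when the largest part is a singleton the graph is a complete graph $K_k$ with $k \ge 3$, and when the largest part has size $2$ one verifies $n \ge 4$, so the balance condition always holds. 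Alternatively one can simply exhibit a Hamilton cycle directly by interleaving the parts.

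I expect the main obstacle to be the complete multipartite case, and in particular justifying the Hamiltonicity criterion (or constructing the cycle explicitly while carefully tracking the small boundary cases such as $K_{2,2} = C_4$); this is where the real combinatorial content sits, whereas the triangle-free branch is essentially immediate. A secondary point worth flagging is that the whole reduction rests on the paw-free structure theorem, so if a fully self-contained argument is desired, then establishing that dichotomy — rather than the subsequent case analysis — becomes the technical heart of the proof.
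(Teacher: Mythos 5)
Your argument is correct, but note first that the paper does not prove Theorem~\ref{thm:paw-free} at all---it is imported verbatim from Goodman and Hedetniemi \cite{paw-free}, so there is no in-paper proof to compare against. What the paper does offer is the remark following the theorem that the only graphs satisfying its hypotheses are cycles, cliques, and cliques with a matching removed; your proof is essentially a verification of exactly that classification. Your route is genuinely different from the original 1974 argument, which is direct and necessarily so, since it predates Olariu's 1988 dichotomy for paw-free graphs. Both of your cases check out: triangle-freeness plus claw-freeness forces maximum degree $2$, and $2$-connectedness then leaves only a cycle; in the complete multipartite case, claw-freeness caps every part at size $2$, so $G$ is a clique minus a matching, and the balance condition $n_1 \le n_2 + \dots + n_k$ reduces to $n \ge 4$ when the largest part has size $2$, which $2$-connectedness supplies (the only smaller candidate is $K_{2,1}$, a path, hence not $2$-connected). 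If you want to avoid citing the multipartite Hamiltonicity criterion, observe instead that a clique minus a matching on $n \ge 4$ vertices has minimum degree $n-2 \ge n/2$ and invoke Dirac's theorem. The trade-off is clear: your proof is short and structural but uses Olariu's theorem as a black box, whereas the original is self-contained; as you note yourself, making your version self-contained would shift all the work into proving the triangle-free/complete-multipartite dichotomy.
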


Even for finite graphs the condition in Theorem~\ref{thm:paw-free} is very restricting:
the only graphs satisfying this condition are cycles, cliques and cliques with a matching removed.
We shall see in Section~\ref{sec:examples} that there do not exist any infinite locally finite $2$-connected claw-free and paw-free graphs.

However, we shall study a variant of the condition in Theorem~\ref{thm:paw-free} where the paw-freeness is relaxed.
We focus on the following Hamiltonicity result due to Broersma and Veldmann.
In order to state it we have to give two further definitions.
A graph is called \emph{pancyclic} if it contains a cycle of every possible length.
Let $H$ be an induced subgraph of a graph $G$ and $v,w \in V(H)$.
We shall write $\phi_H (v,w)$ for the property that $v$ and $w$ have a common neighbour in $G$ outside of $V(H)$.
We shall simply write $\phi(v,w)$ if the context makes it clear to which subgraph $H$ we are referring to. 

\begin{restatable}{thm}{finBroeVeld}\cite{BroeVeld}*{Thm. 2}\label{thm:finBroe}
Let $G$ be a finite, $2$-connected, claw-free graph.
If every induced paw of $G$ satisfies $\phi(a_1,b_{i})$ for some $i \in \{1, 2\}$, then either $G$ is pancyclic or $G$ is a cycle.
\end{restatable}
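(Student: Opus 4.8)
The plan is to prove the two halves of ``$G$ is pancyclic'' in turn and to let the stated dichotomy emerge from the second half: first I would show that $G$ is Hamiltonian, and then, assuming $G$ is not itself a cycle, that $G$ contains a cycle of every length $\ell$ with $3 \le \ell \le \abs{V(G)}$. The only obstruction to reducing a Hamilton cycle step by step all the way down to a triangle will turn out to be that $G$ has no chords at all, i.e. $G$ equals that cycle; this is exactly the exceptional case. Two standing reductions are convenient. If every vertex has degree $2$, then $G$, being connected and $2$-regular, is a cycle and we are in the exceptional case, so I may assume some vertex has degree at least $3$. And the recurring tool throughout is the elementary claw-free fact that for every vertex $u$ the neighbourhood $N(u)$ has independence number at most $2$, hence contains an edge whenever $\abs{N(u)} \ge 3$; in particular $G$ has a triangle.

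For Hamiltonicity I would argue with a longest cycle $C = v_0 v_1 \cdots$. Suppose $C$ omits a vertex; by $2$-connectivity pick $v \notin C$ with a neighbour $v_i \in C$. If $v$ is adjacent to $v_{i-1}$ or to $v_{i+1}$, we insert $v$ into $C$ and obtain a longer cycle, a contradiction. Otherwise $\{v, v_{i-1}, v_{i+1}\}$ is independent in $N(v_i)$ unless $v_{i-1}$ and $v_{i+1}$ are adjacent; claw-freeness forces the latter, so $\{a_0, a_1, b_1, b_2\} = \{v_i, v, v_{i-1}, v_{i+1}\}$ is an induced paw with apex $a_0 = v_i$ and pendant $a_1 = v$. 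The hypothesis now yields $\phi(a_1, b_j)$ for some $j$, i.e.\ a common neighbour $c \notin \{v_i, v, v_{i-1}, v_{i+1}\}$ of $v$ and one of $v_{i-1}, v_{i+1}$; threading $C$ through the detour $v_i\,v\,c\,v_{i\pm 1}$ (with a short sub-rerouting if $c$ happens to lie on $C$) absorbs $v$ into a strictly longer cycle, again contradicting maximality. Hence $C$ is Hamiltonian.

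For pancyclicity, write $H = v_0 \cdots v_{n-1}$ for the Hamilton cycle and assume $G \ne H$, so $H$ has a chord. I would reduce downward: from an $\ell$-cycle $C_\ell = u_0 \cdots u_{\ell-1}$ with $4 \le \ell \le n$ produce an $(\ell-1)$-cycle. If $u_{i-1}$ and $u_{i+1}$ are adjacent for some $i$, then deleting $u_i$ and using the chord $u_{i-1}u_{i+1}$ gives length $\ell - 1$ immediately. The heart of the matter is the complementary case, where $C_\ell$ has no such shortcut. Here any chord $u_iu_j$ of $C_\ell$ meets, by claw-freeness applied at $u_i$ to $\{u_{i-1}, u_{i+1}, u_j\}$, a companion edge $u_j u_{i-1}$ or $u_j u_{i+1}$; the two chords sharing the endpoint $u_j$ already carve out cycles of two consecutive lengths, while a further cycle-neighbour (such as $u_{i+2}$, which is nonadjacent to $u_i$ precisely because there is no shortcut) completes an induced paw, and the common neighbour supplied by $\phi$ enables a reroute dropping the length by exactly one. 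Iterating down to triangles, which exist by the reduction above, then exhibits every length in $\{3, \dots, n\}$, so $G$ is pancyclic.

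The main obstacle I anticipate is precisely this last reduction in the no-shortcut case. Claw-freeness by itself never shortens such a cycle, so the paw hypothesis must be triggered in the right local configuration and, crucially, must yield a decrease of exactly one rather than an uncontrolled jump; verifying that the rerouted cycle has length exactly $\ell - 1$, and handling the nuisance cases where the common neighbour $c$ or the companion chord already lie on the current cycle, is where the real bookkeeping sits. A secondary but necessary point is to keep the genuinely exceptional graph $G = H$ cleanly separated from graphs with even a single chord, since one chord already destroys chordlessness and pushes $G$ back into the shortenable regime. Managing this boundary, together with ensuring that no cycle configuration escapes the reach of the paw condition, is what I expect to make the argument delicate.
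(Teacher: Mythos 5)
First, a point of orientation: the paper does not prove this statement at all --- it is imported verbatim from Broersma and Veldman (\cite{BroeVeld}, Thm.~2) and used as a black box (e.g.\ to dispose of the finite case at the start of the proof of Theorem~\ref{thm:infBroe}). So there is no in-paper proof to compare against. Judged on its own terms, the first half of your argument is sound: the longest-cycle absorption step --- insert $v$ directly if it sees two consecutive cycle vertices, otherwise force the chord $v_{i-1}v_{i+1}$ by claw-freeness, read off the induced paw with pendant $v$, and thread the common neighbour $c$ through either off the cycle or, if $c$ lies on $C$, by bypassing $c$ via the chord $c^-c^+$ (which claw-freeness again supplies) --- is exactly the extension mechanism the paper formalises in Lemma~\ref{erweiterbar} (types $(1)$, $(2.1)$, $(2.2)$), and it does prove Hamiltonicity.

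The pancyclicity half, however, has a genuine gap, and it is the entire content of the theorem beyond Hamiltonicity. Your reduction from $C_\ell$ to $C_{\ell-1}$ in the no-shortcut case is only a hope, not an argument. Concretely: (i)~for $\ell<n$ the cycle $C_\ell$ may be induced in $G$, so the premise ``any chord $u_iu_j$'' has nothing to bite on and the described mechanism does not even start; one must then work with vertices off the cycle, and both the $\phi$-detour and the claw-forced companion edges naturally \emph{lengthen} cycles rather than shorten them by one. (ii)~Even when a chord $u_iu_j$ and its claw-forced companion $u_ju_{i\pm1}$ exist, the pair of chords yields cycles of lengths $j-i$, $j-i+1$, $\ell-(j-i)+1$ and $\ell-(j-i)+2$; none of these equals $\ell-1$ unless $u_j$ sits at distance $2$ or $3$ from $u_i$ on the cycle, and distance $2$ is excluded by the no-shortcut assumption. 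So for a generic chord you are back to needing the paw condition to produce a drop of exactly one, which is precisely the step you flag as unverified. Since the exceptional outcome ``$G$ is a cycle'' must also be shown to be the \emph{only} failure mode, this unproved reduction is where the theorem actually lives; as it stands the proposal establishes Hamiltonicity but not pancyclicity, and the original Broersma--Veldman proof handles this descent with a substantially more careful case analysis than the sketch suggests.
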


Obviously being pancyclic implies Hamiltonicity for finite graphs.
We shall generalise Theorem~\ref{thm:finBroe} to locally finite graphs, where we focus on verifying Hamiltonicity.
We do this since we probably have no meaningful length parameter for distinguishing different infinite cycles, but we have a meaningful notion for Hamiltonicity.
More precisely, we will prove the following:

\begin{restatable}{thm}{infBroeVeld}\label{thm:infBroe}
Let $G$ be a locally finite, $2$-connected, claw-free graph.
If every induced paw of $G$ satisfies $\phi(a_1,b_{i})$ for some $i \in \{1, 2\}$, then $G$ is Hamiltonian.
\end{restatable}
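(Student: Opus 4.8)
Since Theorem~\ref{thm:finBroe} already settles the finite case (if $G$ is a finite cycle it is trivially Hamiltonian, and pancyclicity gives a finite Hamilton cycle, which is a circle in $|G|$), I assume throughout that $G$ is infinite. The plan is to pin down the global shape of $G$ from the two local hypotheses, reducing to a small number of ``fattened rays'' on which a spanning double ray is visible, and then to glue these together --- through the ends, and through a finite core controlled by Theorem~\ref{thm:finBroe} --- into a single Hamilton circle.

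First I would extract structure. Claw-freeness says that for every vertex $v$ the neighbourhood $N(v)$ has no independent set of size $3$, so $N(v)$ is covered by at most two cliques. This alone is far too weak --- line graphs of trees are claw-free yet branch wildly --- so the force of the statement lies in the paw condition, which acts as a closure hypothesis: whenever a triangle carries a pendant $a_1$ at $a_0$, the witness vertex for $\phi(a_1,b_i)$ provides an extra edge with which one can reroute around $a_0$. Combining this with $2$-connectivity and local finiteness, the structural goal is to show that $G$ has only finitely many ends and that, outside a large finite set, $G$ splits into finitely many fattened rays, each converging to its own end, on each of which all vertices can be threaded onto a double ray tending to that end. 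I expect the number of ends to be small (at most two in the principal case). This is the part I expect to be the main obstacle: translating two essentially local constraints into a genuinely global bound on the end structure, and ruling out the branching that claw-freeness on its own would allow.

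Granting such a structural description, I would build the circle as follows. To the finite core I apply Theorem~\ref{thm:finBroe}: choosing a finite $2$-connected subgraph $H$ that contains the core together with the finitely many paw-witnesses its induced paws require (claw-freeness is inherited by induced subgraphs), the theorem yields, in either of its two cases, a cycle through all vertices of $H$. I would then reroute this finite cycle so that at the attachment point of each fattened ray it runs out along the ray to the corresponding end and returns, so that each end is met by exactly two arcs converging to it, which absorb all the vertices of that ray. Finally, using the standard criterion for when a subspace of $|G|$ is a circle, I would check that the resulting edge set $D$ is $2$-regular at every vertex, meets each end of $G$ with degree exactly two, and has connected closure, so that $\overline{D}$ is a homeomorphic copy of $S^1$ passing through every vertex of $G$. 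The only delicate point in this last stage is the bookkeeping at the ends --- that the finitely many double rays pair up through the ends into one circle rather than several disjoint ones --- which is exactly what the structural stage is arranged to make routine.
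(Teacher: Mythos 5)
Your plan founders at the structural stage, and the obstruction is not a technicality: the structural description you hope to extract is false for this class of graphs. You expect $G$ to have finitely many ends (``at most two in the principal case'') and to decompose, outside a finite core, into finitely many fattened rays. But Section~\ref{sec:examples} of the paper exhibits graphs satisfying both hypotheses with any finite number of ends, with $\aleph_0$ many ends, and with $2^{\aleph_0}$ many ends --- for instance the $2$-blow-up of the line graph of the $3$-regular tree $T_3$. (It is the claw-free \emph{and net-free} graphs of the first paper in this series that have at most two ends; the relaxed paw condition here does not tame the end space at all.) Consequently your gluing step cannot work as described: a finite cycle rerouted along finitely many double rays meets only finitely many ends, so no such object can be a Hamilton circle of a graph whose end space is a Cantor set. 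The degree-two-at-every-end bookkeeping you defer to the structural stage is not routine; it is impossible in the form you propose.

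What the paper does instead is a genuine limiting argument: it constructs an infinite nested sequence of finite cycles $(C^i)$, each extending the previous one deep enough into every infinite component left by a separator (``umbrella''), together with a system of finite cuts that each cycle crosses exactly twice, and then invokes a convergence criterion (Lemma~\ref{lem:KarlZiel}) guaranteeing that such a sequence converges to a circle in $|G|$. The paw condition is used only locally, exactly as you intuit --- as a rerouting device that lets one absorb a neighbouring vertex into a cycle while disturbing only edges near that vertex (Lemma~\ref{erweiterbar}) --- but the global assembly is by iteration and compactness, not by a one-shot decomposition. If you want to salvage your approach, you would need to replace the ``finitely many fattened rays'' picture with an inductive scheme over separators and keep quantitative control of how each successive cycle crosses the cuts around each end; at that point you have essentially rediscovered the paper's argument.
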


Different from the graphs considered in our first paper of this series~\cite{HC_sub_1}, we shall state examples of graphs fulfilling the premise of Theorem~\ref{thm:infBroe} with arbitrarily, but finitely many ends, with $\aleph_0$ many and with $2^{\aleph_0}$ many ends.

The structure of this paper is as follows.
In Section~\ref{sec:Preliminaries} we introduce the needed definitions and notation.
Then we conclude that section by introducing the tools we need for the proof of Theorem~\ref{thm:infBroe}.
In Section~\ref{sec:examples} we show that no infinite locally finite graphs exist that meet the criteria of Theorem~\ref{thm:paw-free}, except for being finite.
Afterwards we give examples of infinite locally finite graphs fulfilling the conditions of Theorem~\ref{thm:infBroe}.
Finally, we prove our main result, Theorem~\ref{thm:finBroe}, in Section~\ref{sec:mainproof} and we start that section with a sketch of our proof.

\section{Preliminaries}\label{sec:Preliminaries}

In general we follow the graph theoretical notation from \cite{Diestel.Buch}.
Especially regarding the topological notions for locally finite graphs, we refer to \cite{Diestel.Buch}*{Ch.\ 8.5}.
To see a wider survey regarding topological infinite graph theory, see \cite{Diestel.Arx}. 

\subsection{Basic notions}\label{subsec:basic}

All graphs considered in this paper are undirected and simple.
Generally, we do not assume a graph to be finite.
We call a graph \textit{locally finite} if every vertex has finite degree.

For the rest of this section let $G$ denote some graph.
Later in this section, however, we shall make further assumptions on $G$.

Let $X$ be a vertex set of $G$.
We denote by $G[X]$ the induced subgraph of $G$ with vertex set $X$.
For small vertex sets, we sometimes omit the set brackets, i.e.~we write $G[a,b,c]$ as a short form for $G[\{a,b,c\}]$.
We write $G-X$ for the graph $G[V \setminus X]$.
If $H$ is a subgraph of $G$ we shall write $G-H$ instead of $G-V(H)$.
Again we omit set brackets around small vertex sets, especially for singleton sets.
We briefly denote the cut $E(X, V \setminus X)$ by $\delta(X)$.
For any $i \in \mathbb{N}$ we denote by $N_i(X)$ and $N_i(v)$ the set of vertices of distance at most $i$ in $G$ from the vertex set $X$ or from a vertex $v \in V(G)$. 
We denote by $\partial(X)$ the set of vertices $v$ of $X$ with $N(v) \not \subseteq X$.

Let $H$ be a subgraph of $G$ and $v, w \in V(H)$.
We denote by $\varphi_H(v, w)$ the property that $v$ and $w$ have a common neighbour in $G-H$.
We shall drop the subscript when it is clear to which subgraph $H$ we refer to.

Let $C$ be a cycle of $G$ and $u$ be a vertex of $C$.
We implicitly fix an orientation of the cycle and we write $u^+$ and $u^-$ for the neighbour of $u$ in $C$ in positive and negative, respectively, direction of $C$ using a fixed orientation of $C$.
Later on we will not always mention that we fix an orientation for the considered cycle using this notation. 
For two vertices $v$ and $w$ on $C$, we denote by $vCw$ the $v$--$w$ path in $C$ that follows the orientation from $v$ to $w$.

If $G$ is a finite graph containing a cycle of length $s$ for every $s \in \{3,4, \ldots, |V(G)| \}$, we call $G$ \emph{pancyclic}.

If $v$ and $w$ are vertices of a tree $T$, then we denote by $vTw$ the unique $v$--$w$ path in $T$.

A one-way infinite path $R$ in $G$ is called a \textit{ray} of $G$ and a two-way infinite path in $G$ is called a \emph{double ray} of $G$.
A subgraph of a ray $R$ is called a \emph{tail} of $R$.
The unique vertex of degree $1$ of $R$ is called the \emph{start vertex} of $R$.
For a vertex $r$ on a ray $R$, we denote the tail of $R$ with start vertex $r$ by $rR$.

An equivalence relation can be defined on the set of all rays of $G$ by saying that two rays in $G$ are \emph{equivalent} if they cannot be separated by finitely many vertices.
It is easy to check that this defines in fact an equivalence relation.
The corresponding equivalence classes of rays under this relation are called the \textit{ends} of $G$.
We denote the sets of ends of a graph $G$ with $\Omega(G)$.
If $R \in \omega$ for some end $\omega \in \Omega(G)$, then we briefly call $R$ an \emph{$\omega$-ray}.

Note that for any end $\omega$ of $G$ and any finite vertex set $S \subseteq V(G)$ there exists a unique component $C(S, \omega)$ that contains tails of all $\omega$-rays.
We say that a finite vertex set $S \subseteq V(G)$ \emph{separates} two ends $\omega_1$ and $\omega_2$ of $G$ if $C(S, \omega_1) \neq C(S, \omega_2)$.
Note that any two different ends can be separated by a finite vertex set.

Let $R$ be a ray in $G$ and $X \subseteq V(G)$ be finite.
We call $R$ \emph{distance increasing w.r.t. $X$} if $|V(R) \cap N_i(X)| = 1$ for every $i \in \mathbb{N}$.
Note that a distance increasing ray w.r.t.\ $X$ has its start vertex in $X$.

\subsection{Topological notions}\label{subsec:top}

We assume $G$ to be locally finite and connected for the rest of this section.
The graph $G$ together with its ends can be endowed with a certain topology, yielding the space $|G|$ referred to as \emph{Freudenthal compactification} of $G$.
Note that within $|G|$, every ray of $G$ converges to the end of $G$ it is contained in.
For a precise definition of~$|G|$, see \cite{Diestel.Buch}*{Ch.\ 8.5}.
See~\cite{Freud} for Freudenthal's paper about the Freudenthal compactification, and see~\cite{Freud-Equi} about the connection to $|G|$.

Given a point set $X$ in $|G|$, we denote its closure in $|G|$ by $\overline{X}$.

We call the image of a homeomorphism which maps from the unit circle $S^1 \subseteq \mathbb{R}^2$ to $|G|$ a \textit{circle} of $G$.
We call $G$ \textit{Hamiltonian} if there is a circle in $|G|$ containing all vertices of $G$, and thus also all ends of $G$ due the closedness of circles.
Such a circle is called a \emph{Hamilton circle} of $G$.
Note that this notion coincides with the usual notion of Hamiltonicity for finite graphs.

\subsection{Tools}\label{subsec:Tools}

In this subsection we introduce some basic lemmas we shall use to prove our results.
We begin with a brief lemma about the existence of distance increasing rays with respect to finite vertex sets.
The proof of this lemma works via a very easy compactness argument and we omit it here.
In case a proof is desired, see for example~\cite{HC_sub_1}*{Lemma~2.3}.

\begin{lemma}\label{lem:dist-ray}
Let $G$ be an infinite locally finite connected graph and $X \subseteq V(G)$ be finite.
Then there exists a distance increasing ray w.r.t.\ $X$.
\end{lemma}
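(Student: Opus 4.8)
The plan is to obtain $R$ by a compactness argument: I would first produce, for each $n$, a finite ``distance increasing'' path from $X$ of length at least $n$, and then glue these together into an infinite ray using K\"onig's Infinity Lemma. Throughout, write $d(u, X)$ for the distance in $G$ from a vertex $u$ to the finite set $X$, so that $N_i(X) = \{u \in V(G) : d(u, X) \le i\}$.

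First I would note that local finiteness forces every ball $N_i(X)$ to be finite: this follows by induction on $i$, since $N_0(X) = X$ is finite and $N_{i+1}(X) \subseteq N_i(X) \cup \bigcup_{v \in N_i(X)} N(v)$ is a finite union of finite sets. As $G$ is infinite and connected, the finite balls $N_i(X)$ cannot exhaust $V(G)$ for any fixed $i$, so for every $n \in \mathbb{N}$ there is a vertex $v$ with $d(v, X) \ge n$. Choosing a shortest $X$--$v$ path $P = x_0 x_1 \cdots x_k$ (with $x_0 \in X$ and $k = d(v,X) \ge n$), a standard triangle-inequality argument gives $d(x_j, X) = j$ for every $j \le k$; in particular $P$ starts in $X$ and meets each distance level $\{u : d(u,X) = j\}$ exactly once, i.e.\ $P$ is a finite distance increasing path w.r.t.\ $X$.

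Next I would collect all finite distance increasing paths starting in $X$ into a forest whose roots are the (finitely many) vertices of $X$ and in which the children of a path are its one-vertex extensions that are still distance increasing. This forest is finitely branching, because the admissible successors of the last vertex $x_k$ are among its neighbours lying at distance $k+1$ from $X$, of which there are finitely many by local finiteness; and it is infinite, since by the previous paragraph it contains paths of unbounded length. K\"onig's Infinity Lemma then yields an infinite branch $x_0 x_1 x_2 \cdots$. Because $d(x_i, X) = i$, the $x_i$ are pairwise distinct, so this branch is genuinely a ray $R$; it starts in $X$ and meets every distance level of $X$ exactly once, which is the desired distance increasing property.

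The argument is routine, so I expect no serious obstacle; the only points deserving care are the two uses of local finiteness --- finiteness of the balls $N_i(X)$ (which, together with $G$ being infinite, supplies vertices arbitrarily far from $X$) and finite branching of the forest --- together with the verification that a shortest path from $X$ raises the distance to $X$ by exactly one at each step, which is precisely what keeps the finite approximations inside the forest.
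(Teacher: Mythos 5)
Your argument is correct and is precisely the ``very easy compactness argument'' the paper alludes to when it omits the proof (referring to \cite{HC_sub_1}*{Lemma~2.3}): local finiteness makes every ball $N_i(X)$ finite, which both supplies arbitrarily long shortest (hence distance increasing) paths from $X$ and makes the tree of such paths finitely branching, so K\"onig's Infinity Lemma yields the ray. No gaps; your reading of ``distance increasing'' as meeting each distance level $\{u : d(u,X)=j\}$ exactly once is the intended one (and the one actually used later, e.g.\ in the proof of Proposition~\ref{prop:no_paw-free}).
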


The following statements are all about claw-free graphs.
The first is a very easy observation and probably folklore, so we do not prove it here.
However, in case a proof is desired, consider for example~\cite{Heuer.2015}*{Prop.\ 3.7.}.

\begin{proposition} \label{prop:2comp} 
Let $G$ be a connected claw-free graph and $S$ be a minimal vertex separator in $G$. Then $G-S$ has exactly two components.
\end{proposition}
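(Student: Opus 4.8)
The plan is to extract structural information from the minimality of $S$ and then derive a contradiction from claw-freeness if $G-S$ were to have three or more components. Throughout, write $C_1, \dots, C_k$ for the components of $G-S$. Since $S$ is a separator, $G-S$ is disconnected, so $k \ge 2$; and since $G$ is connected, $S$ cannot be empty (otherwise $G-S = G$ would be connected).

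First I would record the crucial consequence of minimality: every vertex $s \in S$ has a neighbour in \emph{each} component of $G-S$. Indeed, by minimality $S \setminus \{s\}$ is no longer a separator, so $G - (S \setminus \{s\})$ is connected. This graph is obtained from $G-S$ by adding back the single vertex $s$ together with its incident edges. If $s$ had no neighbour in some component $C_j$, then $C_j$ would remain a separate component of $G - (S \setminus \{s\})$, since its only possible connections to the rest of the graph ran through $S$, and $S \setminus \{s\}$ is still deleted while $s$ does not touch $C_j$; this would contradict connectedness. Hence $s$ is adjacent to every $C_j$ (equivalently, each $C_j$ is a \emph{full} component, in the sense that every vertex of $S$ has a neighbour in it).

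Next I would argue by contradiction. Suppose $G-S$ had at least three components, say $C_1, C_2, C_3$. Fix any $s \in S$. By the previous step, $s$ has neighbours $v_1 \in C_1$, $v_2 \in C_2$, and $v_3 \in C_3$. Because the $C_j$ are distinct components of $G-S$, there are no edges between them, so $v_1, v_2, v_3$ are pairwise non-adjacent. Then $G[\{s, v_1, v_2, v_3\}]$ is an induced $K_{1,3}$ with centre $s$, i.e. a claw, contradicting claw-freeness. Therefore $k \le 2$, and combined with $k \ge 2$ we conclude that $G-S$ has exactly two components.

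The proposition is not difficult, and the one step that needs genuine care is the first one: the precise reading of ``minimal vertex separator'' and the argument that adding back a single vertex of a minimal separator must meet every component. If one instead only knows that $S$ is minimal among the sets separating a fixed pair $a,b$, the same conclusion is reached by first showing that the two components containing $a$ and $b$ are full, and then running the identical claw argument on any putative third component $D$, using a vertex $s \in S$ adjacent to $D$ together with its neighbours in the two full components. Either way, it is precisely claw-freeness that rules out three or more components.
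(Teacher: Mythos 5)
Your proof is correct and is precisely the standard folklore argument that the paper itself omits (deferring to \cite{Heuer.2015}*{Prop.\ 3.7}): minimality forces each vertex of $S$ to send edges into every component (or at least into two full components under the $a$--$b$ reading, which you also handle), and a third component would then yield an induced claw centred at a vertex of $S$. Nothing to add.
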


Since we have to extend cycles very carefully in the proof of our main result, Theorem~\ref{thm:infBroe}, the following lemma will be very helpful for us.
Again, that result is probably folklore and the proof of that lemma is very easy, but it can be found for example in~\cite{Heuer.2015}*{Lemma 3.8.}.

\begin{lemma}\label{lem:Ncomplete} 
Let $G$ be a connected claw-free graph and $S$ be a minimal vertex separator in~$G$. For every vertex $s \in S$ and every component $K$ of $G-S$, the graph $G[N(s)\cap V(K)]$ is complete.
\end{lemma}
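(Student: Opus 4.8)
The plan is to exploit the claw-freeness directly. Proposition~\ref{prop:2comp} tells us that $G-S$ has exactly two components, say $K$ and $K'$, so there is a well-defined ``other component'' to work with. The idea is that if two neighbours of $s$ inside $K$ failed to be adjacent, one should be able to complete them to an induced claw centred at $s$ by adjoining a neighbour of $s$ that lies in $K'$; the non-edges then come for free from the fact that $K$ and $K'$ are different components.

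First I would establish the key structural fact that every vertex $s \in S$ has at least one neighbour in each of $K$ and $K'$. This is where the minimality of $S$ enters. Suppose, for contradiction, that $s$ has no neighbour in $K'$. Then in the graph $G - (S \setminus \{s\})$ the vertex $s$ is adjacent only to vertices of $K$, since its neighbours in $S$ have been removed and it has none in $K'$ by assumption. As there were already no edges between $K$ and $K'$, the vertex set $V(K')$ stays separated from $V(K) \cup \{s\}$, so $S \setminus \{s\}$ still separates $G$. This contradicts the minimality of $S$. Hence $s$ has a neighbour in $K'$, and the same argument gives one in $K$.

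With this in hand the lemma follows at once. Suppose $G[N(s) \cap V(K)]$ were not complete, so that there exist non-adjacent $x, y \in N(s) \cap V(K)$. By the previous step I may choose a neighbour $z \in N(s) \cap V(K')$. Since $x, y \in V(K)$ and $z \in V(K')$ lie in distinct components of $G-S$ and none of them lies in $S$, both $xz$ and $yz$ are non-edges; combined with the assumed non-edge $xy$, this makes $G[\{s, x, y, z\}]$ an induced $K_{1,3}$ with centre $s$, contradicting the claw-freeness of $G$. Thus no such pair $x, y$ exists, and $G[N(s) \cap V(K)]$ is complete.

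The only genuine content lies in the minimality argument of the second step, namely guaranteeing that $s$ really does have the third claw-leaf $z$ available in the opposite component; once that is secured, the claw obstruction is immediate and everything else is bookkeeping about edges between distinct components.
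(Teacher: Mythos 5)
Your proof is correct and follows exactly the argument the paper intends (the paper itself omits the proof and defers to \cite{Heuer.2015}*{Lemma 3.8}): minimality of $S$ gives $s$ a neighbour $z$ in the other component, and two non-adjacent neighbours of $s$ in $K$ together with $z$ would form an induced claw centred at $s$. Nothing to add.
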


The next lemma is a structural result for locally finite claw-free graphs about vertex sets separating some finite vertex set from all ends of the graph.
This result forms the backbone for the proof of the main result of this article.

\begin{lemma}\cite{Heuer.2015}*{Lemma 3.10}\label{lem:KarlStr}
Let $G$ be an infinite, locally finite, connected, claw-free graph and ${X}$ be a finite vertex set of $G$ such that $G[X]$ is connected. Furthermore, let $\mathfrak{S} \subseteq V(G)$ be a finite minimal vertex set such that $\mathfrak{S} \cap X = \emptyset$ and every ray starting in $X$ has to meet $\mathfrak{S}$. Then the following holds:
\begin{enumerate}
 	\item $G-\mathfrak{S}$ has $k \geq 1$ infinite components $K_1, \ldots, K_k$ and the set $\mathfrak{S}$ is the disjoint union of minimal vertex separators $S_1, \ldots, S_k$ in $G$ such that for every $i$ with $1 \leq i \leq k$ each vertex in $S_i$ has a neighbour in $K_j$ if and only if $j=i$.
 	\item $G-\mathfrak{S}$ has precisely one finite component $K_0$. This component contains all vertices of $X$ and every vertex of $\mathfrak{S}$ has a neighbour in $K_0$.
\end{enumerate}
\end{lemma}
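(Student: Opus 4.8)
The plan is to read off the entire structure from two ingredients: a short compactness argument that pins down the finite component, and claw-freeness, which forces $\mathfrak{S}$ to split cleanly according to the infinite components.

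First I would locate the finite component. Since $G[X]$ is connected and $\mathfrak{S}\cap X=\emptyset$, all of $X$ lies in a single component $K_0$ of $G-\mathfrak{S}$. I claim $K_0$ is finite: if it were infinite then, being connected, locally finite and infinite, it would contain a ray (by a standard compactness argument), and joining any $x\in X$ to that ray inside $K_0$ would yield a ray starting in $X$ and avoiding $\mathfrak{S}$ altogether, contradicting the separating property of $\mathfrak{S}$. This already gives the first half of~(2). Next I would exploit the minimality of $\mathfrak{S}$: for each $s\in\mathfrak{S}$ there is a ray $R_s$ starting in $X$ that avoids $\mathfrak{S}\setminus\{s\}$, and since $R_s$ must still meet $\mathfrak{S}$ it meets it exactly in $s$. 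The part of $R_s$ before $s$ stays in $K_0$, so the vertex preceding $s$ is a neighbour of $s$ in $K_0$; hence \emph{every} vertex of $\mathfrak{S}$ has a neighbour in $K_0$, finishing~(2). The tail of $R_s$ after $s$ avoids $\mathfrak{S}$ and is infinite, so it lies in a single infinite component of $G-\mathfrak{S}$ distinct from $K_0$; thus every $s$ also has a neighbour in some infinite component, and in particular at least one infinite component exists, i.e.\ $k\ge 1$.

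The heart of the argument, and the step I expect to be the main obstacle, is to use claw-freeness to organise $\mathfrak{S}$. Fix $s\in\mathfrak{S}$ together with a neighbour $w\in K_0$. If $s$ had neighbours in two distinct infinite components, or a neighbour in a finite component other than $K_0$, then together with $w$ it would have three pairwise non-adjacent neighbours (neighbours lying in different components of $G-\mathfrak{S}$ are non-adjacent), forming an induced claw $K_{1,3}$, a contradiction. Hence each $s$ has neighbours in exactly one infinite component and in no other finite component. Setting $S_i:=\{s\in\mathfrak{S}: s \text{ has a neighbour in } K_i\}$ for the infinite components $K_i$ then gives a disjoint decomposition $\mathfrak{S}=S_1\dcup\cdots\dcup S_k$ with exactly the stated adjacency pattern; each $S_i$ is nonempty by connectedness of $G$, and as the $S_i$ are disjoint subsets of $\mathfrak{S}$ there are at most $\abs{\mathfrak{S}}$ of them, so $k$ is finite. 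Moreover any component other than $K_0$ must receive an edge from $\mathfrak{S}$, and such an edge can only land in an infinite component, so $K_0$ is the \emph{unique} finite component; this settles the remaining claims of~(1) and~(2).

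Finally I would verify that each $S_i$ is a minimal vertex separator. The only neighbours of $K_i$ outside $K_i$ lie in $S_i$, so deleting $S_i$ disconnects $K_i$ from $K_0$ and $S_i$ separates $G$. For minimality, every $s\in S_i$ has a neighbour in $K_i$ (by definition) and a neighbour in $K_0$ (from the minimality step), so $s$ joins the two sides and $S_i\setminus\{s\}$ no longer separates $K_i$ from $K_0$; equivalently, each vertex of $S_i$ is adjacent to both of the distinct components $K_i$ and the one containing $K_0$ in $G-S_i$, which is precisely the condition for a minimal separator (and Proposition~\ref{prop:2comp} confirms the resulting two-component picture). This completes the proof.
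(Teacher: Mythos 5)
The paper does not prove this lemma itself but imports it verbatim from \cite{Heuer.2015}*{Lemma 3.10}, so there is no in-paper proof to compare against. Your argument is correct and follows the natural (and almost certainly the original) route: a compactness/ray argument pins down the finite component $K_0$ containing $X$; minimality of $\mathfrak{S}$ gives each $s\in\mathfrak{S}$ a ray meeting $\mathfrak{S}$ only in $s$, hence neighbours of $s$ both in $K_0$ and in some infinite component; and claw-freeness (with the $K_0$-neighbour as the third leaf) forbids $s$ from seeing any further component, which yields the partition $\mathfrak{S}=S_1\dcup\cdots\dcup S_k$, the uniqueness of the finite component, and the minimality of each separator $S_i$.
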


Given a graph $G$, a finite vertex set $X$ and a set $\mathfrak{S}$ all as in Lemma~\ref{lem:KarlStr} we shall call $\mathfrak{S}$ an \emph{$X$-umbrella}.

The following, last lemma of this section is the tool we use to verify Hamiltonicity for locally finite graphs in this paper.

\begin{lemma}\cite{Heuer.2015}*{Lemma 3.11}\label{lem:KarlZiel}
Let $G$ be an infinite, locally finite, connected graph and $(C^i)_{i \in \mathbb{N}}$ be a sequence of cycles of $G$. Now $G$ is Hamiltonian if there exists an integer $k_i \geq 1$ for every $i \geq 1$ and vertex sets $M^i_j \subseteq V(G)$ for every $i \geq 1$ and $j$ with $1 \leq j \leq k_i$ such that the following is true:

\begin{enumerate}
	\item For every vertex $v$ of $G$, there exists an integer $j \geq 0$ such that $v \in V(C^i)$ holds for every $i \geq j$.\label{ziel1}
	\item For every $i \geq 1$ and $j$ with $1 \leq j \leq k_i$, the cut $\delta(M^i_j)$ is finite.\label{ziel2}
	\item For every end $\omega$ of $G$, there is a function $f : \mathbb{N} \setminus \lbrace 0 \rbrace \longrightarrow \mathbb{N}$ such that the inclusion ${M^{j}_{f(j)} \subseteq M^i_{f(i)}}$ holds for all integers $i, j$ with $1 \leq i \leq j$ and the equation ${M_{\omega}:= \bigcap^{\infty}_{i=1} \overline{M^i_{f(i)}} = \lbrace \omega \rbrace}$ is true.\label{ziel3}
	\item $E(C^i) \cap E(C^j) \subseteq E(C^{j+1})$ holds for all integers $i$ and $j$ with $0 \leq i < j$.\label{ziel4}
	\item The equations $E(C^i) \cap \delta(M^p_j) = E(C^p) \cap \delta(M^p_j)$ and $|E(C^i) \cap \delta(M^p_j)| = 2$ hold for each triple $(i, p, j)$ which satisfies $1 \leq p \leq i$ and $1 \leq j \leq k_p$.\label{ziel5}
\end{enumerate}
\end{lemma}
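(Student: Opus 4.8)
\section*{Proof proposal}

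The plan is to distill from the sequence $(C^i)_{i \in \mathbb{N}}$ a single spanning subgraph $H$ of $G$ whose closure $\overline{H}$ in $|G|$ is the desired Hamilton circle. To define $H$, I would first exploit the stabilisation condition~\ref{ziel4}: if an edge $e$ lies on two cycles $C^i$ and $C^j$ with $i<j$, then $e$ lies on $C^{j+1}$, and iterating this shows $e \in E(C^m)$ for every $m \geq j$. Hence every edge of $G$ appears either on at most one of the $C^i$ or on cofinitely many of them; let $E^\ast$ denote the set of edges of the latter kind and let $H$ be the subgraph with edge set $E^\ast$. Combining condition~\ref{ziel1} with the local finiteness of $G$, I would then argue that $H$ is $2$-regular and spanning. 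Indeed, a vertex $v$ lies on all but finitely many $C^i$; each such cycle uses exactly two of the finitely many edges at $v$, and by the dichotomy above these two edges are persistent once $i$ is large (the finitely many non-persistent edges at $v$ are used only finitely often in total). A short counting argument then forces exactly two persistent edges at $v$: three would place $v$ on degree $\geq 3$ in every late $C^i$, and fewer than two would give degree $\leq 1$. Thus $H$ is a disjoint union of finite cycles and double rays covering $V(G)$, and $\overline{H}$ is a standard subspace of $|G|$ containing every vertex.

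The next step is to control the behaviour of $H$ at the ends, which is exactly what conditions~\ref{ziel2}, \ref{ziel3} and~\ref{ziel5} are engineered for. Fix a region $M^p_j$. Condition~\ref{ziel2} makes its cut $\delta(M^p_j)$ finite, and condition~\ref{ziel5} asserts that every later cycle crosses this cut in the very same two edges; those two edges therefore lie on cofinitely many of the $C^i$ and so belong to $E^\ast$, while condition~\ref{ziel5} simultaneously forbids any further edge of $H$ from crossing $\delta(M^p_j)$. Hence $H$ meets each such cut in exactly two edges. Now fix an end $\omega$ and take the nested sequence $M^i_{f(i)}$ furnished by condition~\ref{ziel3}, whose closures shrink to $\{\omega\}$. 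Each of its finite cuts is crossed by exactly two strands of $H$, and since the regions contain neither other ends nor, in the limit, any vertex, these two strands must converge to $\omega$. This shows at once that $\omega \in \overline{H}$ and that $\omega$ has degree exactly two in $\overline{H}$. As condition~\ref{ziel3} provides such a sequence for every end, $\overline{H}$ contains all ends of $G$ and each has degree two.

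At this point $\overline{H}$ is a closed standard subspace of $|G|$ in which every vertex and every end has degree two; by the standard theory of circles in the Freudenthal compactification (cf.~\cite{Diestel.Buch}*{Ch.\ 8.5}) such a subspace is a disjoint union of circles. It remains to prove that it is connected, hence a single circle. Here I would argue by contradiction: a splitting of $\overline{H}$ into two nonempty clopen parts would, by compactness, be witnessed by a finite edge cut $F$ of $G$ separating the two parts, which each circle component crosses an even number of times. Matching the crossings of $F$ and of the cuts $\delta(M^p_j)$ against the two strands detected at every end, and transporting the connectivity of the finite, hence connected, cycles $C^i$ to the limit, I would derive a contradiction and conclude that $\overline{H}$ is connected. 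Therefore $\overline{H}$ is a single circle through every vertex, i.e.\ a Hamilton circle, and $G$ is Hamiltonian.

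I expect the connectivity argument of the last paragraph to be the main obstacle. Establishing that $H$ does not break up into several disjoint circles requires carefully tracking how the two crossing edges of each cut link the inside and outside of the corresponding region, and turning the connectivity of the approximating cycles into connectivity of the limit across the ends. The end-degree analysis of the second paragraph---making rigorous that exactly two arcs of $\overline{H}$ approach each $\omega$, with no ``loose'' strand surviving into the limit---is the delicate topological heart on which this final step rests.
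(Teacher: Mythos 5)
You should first note that this paper does not actually prove Lemma~\ref{lem:KarlZiel}; it imports it verbatim from \cite{Heuer.2015}*{Lemma 3.11}, so your proposal can only be compared with the argument given there. Your strategy is the right one and matches that source in outline: extract the set $E^\ast$ of persistent edges via condition~(\ref{ziel4}), show that the spanning subgraph $H$ with edge set $E^\ast$ is $2$-regular using condition~(\ref{ziel1}) and local finiteness, control $H$ at the ends via conditions~(\ref{ziel2}), (\ref{ziel3}) and~(\ref{ziel5}), and invoke the characterisation of circles among closed connected standard subspaces of $|G|$. Your first paragraph (the dichotomy persistent/used-at-most-once, and the degree-two count at each vertex) is complete and correct.

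The genuine gaps are exactly the two steps you flag but do not carry out. First, the assertion that for a fixed $i$ the two strands of $H$ crossing $\delta(M^i_{f(i)})$ ``must converge to $\omega$'' is not justified as stated: for fixed $i$ the closure $\overline{M^i_{f(i)}}$ may contain many ends other than $\omega$, so only the upper bound (at most two edge-disjoint arcs of $\overline{H}$ at $\omega$, since three would force three edges of $E^\ast$ into some $\delta(M^i_{f(i)})$) comes for free; the lower bound needs either a compactness argument along the whole nested sequence or connectivity of $\overline{H}$. Second, the connectivity of $\overline{H}$ is only announced, and the clopen-partition route you sketch is vaguer than necessary. It can be done directly: for any finite cut $F=\delta(X)$ of $G$, condition~(\ref{ziel1}) ensures that every sufficiently large $C^i$ meets both sides of $F$ and hence contains at least two edges of $F$; since each non-persistent edge lies on at most one $C^i$ and $F$ is finite, some (indeed all but finitely many) $C^i$ cross $F$ only in persistent edges, so $F\cap E^\ast\neq\emptyset$, and the cut criterion for connectedness of closed spanning standard subspaces (\cite{Diestel.Buch}*{Ch.\ 8.5}) applies. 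This also disposes of the finite cycle components you allow in your decomposition of $H$: a finite cycle component $D$ would consist of persistent edges, forcing $C^i=D$ for all large $i$ and contradicting condition~(\ref{ziel1}). With these two points filled in, your proof goes through.
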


\section{Examples of graph meeting the criteria of Theorem~\ref{thm:infBroe}}\label{sec:examples}

In this section we state examples of infinite locally finite $2$-connected claw-free graphs where each induced paw satisfies $\varphi(a_1, b_i)$ for some $i \in \{ 1, 2 \}$.
While the class of claw-free and net-free graphs, which we considered in the first paper of this series~\cite{HC_sub_1}, allows only graphs with at most two ends, the graphs in this paper have a bigger variety.
We shall give examples of graphs with an arbitrary, but finite number of ends, with $\aleph_0$ many and with $2^{\aleph_0}$ many ends.

However, before we focus on these examples, we prove another proposition.
This result tells us that we cannot try to extend Hamiltonicity results about locally finite $2$-connected claw-free and paw-free graphs as such graphs do not exist.

\begin{proposition}\label{prop:no_paw-free}
Every infinite locally finite connected claw-free graph containing a cycle, also contains an induced paw.
\end{proposition}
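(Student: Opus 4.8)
The plan is to exhibit an explicit induced paw by first producing a triangle and then attaching a suitable pendant vertex, using the infiniteness of $G$ to guarantee that the pendant avoids the triangle. The only ingredient of the hypothesis that is needed to create the triangle is claw-freeness; once a triangle is in hand, the paw is extracted purely from local finiteness and connectivity.

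First I would show that $G$ contains a triangle. Let $C = v_0 v_1 \cdots v_{k-1} v_0$ be a shortest cycle of $G$, which exists since $G$ contains a cycle. If $k = 3$ we are done, so assume $k \ge 4$. Then $C$ is chordless (a chord would split $C$ into two strictly shorter cycles), so in particular $v_{i-1} \not\sim v_{i+1}$ for every $i$. Since $G$ is connected and infinite while $V(C)$ is finite, there is an edge leaving $V(C)$; say $v_i$ has a neighbour $x \notin V(C)$. As $v_{i-1},v_{i+1},x$ are three neighbours of $v_i$ with $v_{i-1}\not\sim v_{i+1}$, claw-freeness forbids $\{v_i;v_{i-1},v_{i+1},x\}$ from being an induced claw, so $x$ is adjacent to $v_{i-1}$ or to $v_{i+1}$; either way this produces a triangle.

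Next I would fix a triangle $T=\{a,b,c\}$ and consider the finite vertex set $T\cup N(a)\cup N(b)\cup N(c)$, which is finite by local finiteness. Since $V(G)$ is infinite, there is a vertex outside this set, and since $G$ is connected there is an edge $wu$ with $w$ inside it and $u$ outside. No vertex of $T$ can have a neighbour outside $N(a)\cup N(b)\cup N(c)$, so $w\notin T$ while $w$ is adjacent to at least one of $a,b,c$; and $u$ lies outside the set, so $u\not\sim a,b,c$ and $u\notin T$. Now I split on how many vertices of $T$ the vertex $w$ meets. If $w$ is adjacent to exactly one of them, then $G[\{a,b,c,w\}]$ is already an induced paw. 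Otherwise $w$ is adjacent to at least two, say $a$ and $b$; then $\{a,b,w\}$ is a triangle, and since $u\sim w$ while $u\not\sim a$ and $u\not\sim b$, the graph $G[\{a,b,w,u\}]$ is an induced paw with pendant vertex $u$. In both cases one checks that the listed edges and non-edges are exactly those of a paw, so the subgraph is induced.

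The genuinely delicate points, which I would take care to argue cleanly, are twofold. The first is the triangle step: one must verify that a shortest cycle of length at least $4$ is chordless and that claw-freeness applied at a vertex with an escaping edge really yields a triangle; this is the sole use of claw-freeness and is where the hypothesis is essential, since without it an infinite triangle-free graph with a cycle is easy to build. The second, and the conceptual crux, is choosing the pendant vertex $u$ \emph{outside} the finite set $T\cup N(a)\cup N(b)\cup N(c)$ rather than merely outside $T$: this is precisely what forces $u$ to be non-adjacent to \emph{all} of $a,b,c$, and it is the infiniteness of $G$ together with local finiteness that makes such a $u$ available. Everything else is a short, routine case distinction.
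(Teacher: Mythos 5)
Your proof is correct, but it takes a noticeably different route from the one in the paper, so a comparison is worthwhile. The paper fixes an arbitrary cycle $C$ and a ray $R=r_0r_1r_2\ldots$ that is distance increasing with respect to $V(C)$ (quoting Lemma~\ref{lem:dist-ray}); a single application of claw-freeness to $G[r_0,r_0^+,r_0^-,r_1]$ then produces the paw in one stroke, since the distance-increasing property simultaneously supplies the triangle (either $r_0r_0^{\pm}r_1$ or $r_0^-r_0r_0^+$) and the required non-adjacencies of the pendant vertex ($r_2$ or $r_1$, respectively). You instead decouple the two tasks: you manufacture a triangle from a shortest cycle (chordless if of length at least $4$, so claw-freeness at a vertex with an escaping neighbour forces a chord-creating edge, hence a triangle), and you then find the pendant by taking an edge crossing the boundary of the finite set $T\cup N(a)\cup N(b)\cup N(c)$, which exists by local finiteness, infiniteness and connectivity. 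Your two-step argument is slightly longer but entirely self-contained --- it avoids the ray lemma (whose proof is a compactness argument the paper only cites) and makes transparent that claw-freeness is needed solely to create the triangle, while the pendant comes from finiteness considerations alone. All the case checks in your write-up (exactly one versus at least two neighbours of $w$ in $T$, and the verification that $u$ is non-adjacent to all of $a,b,c$ because it lies outside $N(a)\cup N(b)\cup N(c)$) are sound.
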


\begin{proof}
Let $G$ be a graph as in the statement and let $C$ be a cycle of $G$. 
Pick a ray $R = r_0r_1r_2 \ldots$ that is distance increasing w.r.t.\ $V(C)$, which exists by Lemma~\ref{lem:dist-ray}.
Recall that $r_0 \in V(C)$ holds.
Now $G[r_{0},r_{0}^+,r_{0}^-,r_{1}]$ is no induced claw since $G$ is claw-free.
If $r_{0}^-r_{1} \in E(G)$ or $r_{0}^+r_{1} \in E(G)$, then this would yield a $K_3$, say $G[r_{0}^+,r_{0},r_{1}]$.
Since $R$ is distance increasing w.r.t.\ $V(C)$, we know that $G[r_{0}^+,r_{0},r_{1},r_{2}]$ is an induced paw.

The only other possibility to avoid $G[r_{0},r_{0}^+,r_{0}^-,r_{1}]$ being an induced claw is $r_{0}^-r_{0}^+ \in E(G)$ but $r_{0}^-r_{1}, r_{0}^+r_{1} \notin E(G)$.
Then $G[r_{0}^-,r_{0}^+,r_{0},r_{1}]$ forms an induced paw.
\end{proof}

Before we come to the examples, in which the $k$-blow-up operation is involved, let us recall the definition of a $k$-blow-up.
Given a graph $G$ and some $k \in \mathbb{N}$, we call a graph $G'$ a \emph{$k$-blow-up} of $G$ if we obtain $G'$ from $G$ by replacing each vertex of $G$ by a clique of size $k$, where two vertices of $G'$ are adjacent if and only if they are either both from a common such clique, or the original corresponding vertices were adjacent in $G$. 

In three example we now state graphs that meet the criteria of Theorem~\ref{thm:infBroe} by describing an initial graph, from which we then take the line graph and then a $k$-blow-up.
Before we state the first example, let us fix some notation.
Let $n \in \mathbb{N}$ and let $S_n$ denote the infinite tree where each vertex but one has degree $2$ and the other vertex has degree $n$.

\begin{example}\label{ex:n-ends}
To find a graph with an arbitrary, but finite number of ends, consider the $k$-blow-up of the line graph of $S_n$ where $k \geq 2$ and $n \geq 3$.
To briefly describe this graph in other words:
It is the $k$-blow-up of the graph formed by a complete graph on $n$ vertices $V(K_n) = \{ v_1, \ldots, v_n \}$ with pairwise disjoint rays $R_i$ starting at the $v_i$, see Figure~\ref{n-ends}.

It is immediate that such graphs are $2$-connected and claw-free.
To check that every induced paw satisfies $\varphi(a_1, b_i)$ for some $i \in \{ 1, 2 \}$ is also straightforward, so we leave this to the reader.

\begin{figure}[htbp]
\centering
\includegraphics[width=8cm]{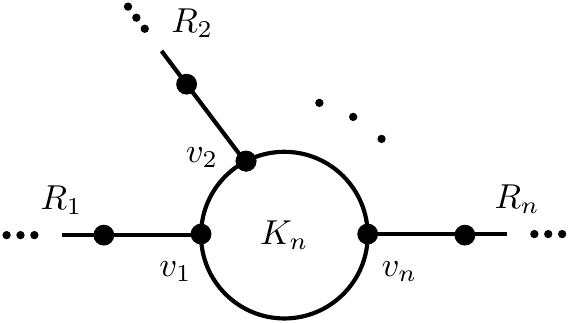} 
\caption{A graph whose $k$-blow-up for $k \geq 2$ has precisely $n \in \mathbb{N}$ ends and meets the conditions of Theorem~\ref{thm:infBroe}.}
\label{n-ends}
\end{figure}

\end{example}

Obviously, we cannot use the construction from example~\ref{ex:n-ends} to obtain suitable graphs with infinitely many ends while staying locally finite.
However, we can extend the idea from the previous example to get such graphs with infinitely many ends.
For every $n \in \mathbb{N}$ let $D_n$ denote the infinite tree where all vertices have degree $2$ except for a set of vertices that induces in $D_n$ a double ray all of whose vertices have degree $n$ within $D_n$.

\begin{example}
Consider the $k$-blow-up of the line graph of $D_n$ for some $k \geq 2$ and $n \geq 3$ (cf.~Figure~\ref{OmegaEnden}).
Again it is easy to verify that such graphs satisfy the conditions of Theorem~\ref{thm:infBroe}.
Furthermore, it is immediate that $D_n$ has precisely $\aleph_0$ many ends for $n \geq 3$ and, hence, so has its line graph and the $k$-blow-up of the line graph.

\begin{figure}[htbp]
\centering
\includegraphics[width=8cm]{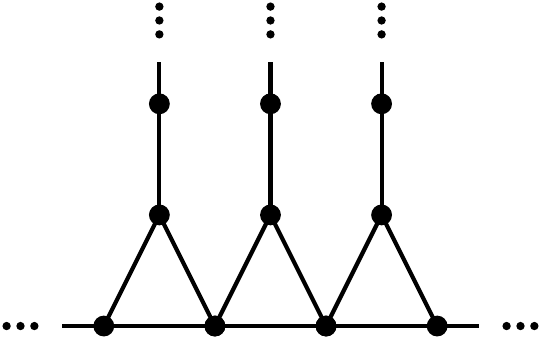} 
\caption{The line graph of $D_3$, whose $k$-blow-up for $k \geq 2$ has precisely $\aleph_0$ many ends and meets the conditions of Theorem~\ref{thm:infBroe}.}
\label{OmegaEnden}
\end{figure}
\end{example}

Let us proceed to the third example describing graphs that meet the conditions of Theorem~\ref{thm:infBroe} and have precisely $2^{\aleph_0}$ many ends.
Again we fix some notation before.
For every $n \in \mathbb{N}$ let $T_n$ denote the infinite tree where each vertex has degree $n$.

\begin{example}
Consider the $k$-blow-up of the line graph $T_n$ for some $k \geq 2$ and $n \geq 3$ (cf.~Figure~\ref{ZweihochOmegaEnden}).
As before, verifying that such graphs satisfy the conditions of Theorem~\ref{thm:infBroe} is easy.
Furthermore, $T_n$ has precisely $2^{\aleph_0}$ many ends for $n \geq 3$.
Therefore, its line graph and the $k$-blow-up of the line graph have that many ends as well.

\begin{figure}[htbp]
\centering
\includegraphics[width=6cm]{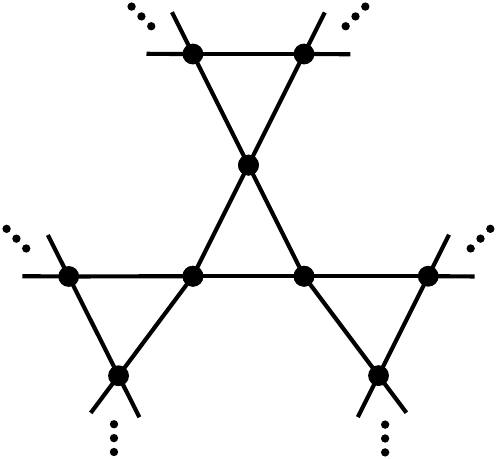} 
\caption{The line graph of $T_3$, whose $k$-blow-up for $k \geq 2$ has precisely $2^{\aleph_0}$ many ends and meets the conditions of Theorem~\ref{thm:infBroe}.}
\label{ZweihochOmegaEnden}
\end{figure}
\end{example}

\section{Proof of Theorem~\ref{thm:infBroe}}\label{sec:mainproof}

In this section we shall prove our main result, Theorem~\ref{thm:infBroe}, which is an extension of Theorem~\ref{thm:finBroe} to locally finite graphs.
Let us describe the general idea for the proof, which has already been successful for other Hamiltonicity results for locally finite claw-free graphs involving local conditions~\cites{Heuer.2015, Heuer.2016}.
In the end we want to apply Lemma~\ref{lem:KarlZiel}.
However, in order to do that, we have to carefully construct suitable cycles and cuts as described in that lemma.
The engine of our proof is the condition about induced paws as in Theorem~\ref{thm:infBroe}.
This allows us to extend any cycle by one or two vertices neighbouring the cycle in a very controlled way keeping most edges of the initial cycle untouched, which is captured in Lemma~\ref{erweiterbar}.

Then the first big step towards the proof of Theorem~\ref{thm:infBroe} is to extend an arbitrary cycle $C$ with respect to a $V(C)$-umbrella $\mathfrak{S}$ to contain all vertices of $K_0$ (as defined in Lemma~\ref{lem:KarlStr}), without containing anything from $\mathfrak{S}$.
This happens in Lemma~\ref{NotTrenner}.
The next step is to carefully extend the cycle into each component $K_i$ while containing all vertices up to the third neighbourhood of $S_i$, but only precisely two vertices of each $S_i$ (cf.~Lemma~\ref{lem:KarlStr}).
This step is rather crucial and achieved in Lemma~\ref{CircleToPromising}.

From this point on we shall not only keep track of a cycle, but also of suitable cuts, each more or less resembling $\delta(V(K_i))$, which our cycle intersects precisely twice.
We shall refer to this as the \emph{$(\star)$ - condition}.
In the remaining lemma, Lemma~\ref{PromisingToGood}, we incorporate all remaining vertices of $\mathfrak{S}$ while maintaining the $(\star)$ - condition.
The key idea here is to dynamically change the cuts as we extend the cycle.
By iterating this whole procedure, we shall get a sequence of cycles and cuts which allows us to apply Lemma~\ref{lem:KarlZiel}.

Not let us start with the lemma which allows us to extend any cycle $C$ within a graph as in Theorem~\ref{thm:infBroe} to incorporate a vertex $v \in N(C)$ without altering many edges of $C$. 

\begin{lemma}\label{erweiterbar}
Let $G$ be a $2$-connected, locally finite claw-free graph such that every induced paw of $G$ satisfies $\phi(a_1,b_{i})$ for some $i \in \{1, 2\}$. 
Let $C$ be a cycle in $G$ and $v$ be a vertex in $N(C)$.
Then there exists a cycle $C'$ and a vertex $w \in N(C)$ such that ${V(C) \cup \{ v\} \subseteq V(C') \subseteq V(C) \cup \{v,w\} }$ holds.
Furthermore, if $xy \in E(C) \Delta E(C')$, then $x \in N_2(v)$ or $y \in N_2(v)$ holds. 
\end{lemma}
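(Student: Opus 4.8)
The plan is to insert $v$ into $C$ by an entirely local modification, distinguishing cases according to how the neighbours of $v$ lie on $C$. Fix an orientation of $C$ and set $A = N(v)\cap V(C)$, which is non-empty because $v\in N(C)$. If some two neighbours of $v$ are consecutive on $C$, say $u,u^+\in A$, I would simply delete the edge $uu^+$ and add $uv$ and $vu^+$. The resulting cycle $C'$ satisfies $V(C')=V(C)\cup\{v\}$, each modified edge is incident with $v$ (so has an endpoint in $N_2(v)$), and the vertex required by the statement may be taken to be $v$ itself, which is legitimate since $v\in N(C)$.

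Otherwise no two neighbours of $v$ are consecutive on $C$, and here the paw hypothesis enters. Picking any $u\in A$, we have $u^+,u^-\notin A$, so in $G[v,u,u^+,u^-]$ the only edge that could join $\{v,u^+,u^-\}$ is $u^+u^-$; since $G$ is claw-free this edge is present, and $G[v,u,u^+,u^-]$ is an induced paw with $a_1=v$, $a_0=u$ and $\{b_1,b_2\}=\{u^+,u^-\}$. The hypothesis therefore supplies a common neighbour $w$ of $v$ and, after possibly reversing the orientation, of $u^+$, with $w\notin\{v,u,u^+,u^-\}$. If $w\notin V(C)$ then $w\in N(C)$, and I would replace the edge $uu^+$ by the path $u\,v\,w\,u^+$, obtaining $C'$ with $V(C')=V(C)\cup\{v,w\}$; the changed edges $uu^+,uv,vw,wu^+$ are each incident with $v$ or with the neighbour $u$ or $w$ of $v$, hence have an endpoint in $N_2(v)$.

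The delicate case, and the main obstacle, is $w\in V(C)$: then $w\in A$, so $w$ is non-consecutive with its cycle-neighbours $w^-,w^+\notin A$, and claw-freeness applied to $G[v,w,w^-,w^+]$ forces a \emph{second} chord $w^-w^+$. I now have two attachment points $u,w\in A$ of $v$, the two chords $u^+u^-$ and $w^-w^+$, and the edge $u^+w$. Writing the positive arc of $C$ as $u\,u^+\cdots w^-\,w\,w^+\cdots u^-\,u$, I would route
\[ C' \colon\quad v\,u\,u^-\cdots w^+\; w^-\cdots u^+\; w\,v, \]
that is: from $u$ step to $u^-$ and run backwards along the arc $w^+Cu^-$ to $w^+$, cross the chord $w^+w^-$, run backwards along the arc $u^+Cw^-$ to $u^+$, cross the edge $u^+w$ to reach $w$, and close through $v$. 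A direct check shows this is a cycle on $V(C)\cup\{v\}$; the removed edges are $uu^+,w^-w,ww^+$ and the added ones $uv,vw,u^+w,w^-w^+$, each incident with $v$, with $u\in N(v)$, or with $w\in N(v)$, so every modified edge again has an endpoint in $N_2(v)$. As $V(C')=V(C)\cup\{v\}$, the vertex required by the statement may once more be taken to be $v$.

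The only points needing extra care are the degenerate overlaps when $w$ lies close to $u$ on $C$ (for instance $w=u^{++}$, making $w^-=u^+$, or $w^+=u^-$): these merely shorten one of the two traversed arcs while leaving the displayed rerouting valid, and I would dispatch them by the same formula. With the insertion lemma in hand, the modified-edge bound is exactly what later allows the extensions to leave most of the cycle untouched, so I expect the proof's weight to rest on the construction in the last case rather than on the routine verifications.
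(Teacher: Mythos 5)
Your proposal is correct and follows essentially the same route as the paper's proof: the same three-way case split (consecutive neighbours on $C$; common neighbour $w$ off $C$; all such $w$ on $C$, forcing the chord $w^-w^+$ by claw-freeness), and your rerouting in the last case produces exactly the cycle $uvwu^+Cw^-w^+Cu$ used in the paper, up to orientation. The only difference is cosmetic — you fix a particular $w$ and then case on whether it lies on $C$, whereas the paper first asks whether some admissible $w$ avoids $C$ — and your extra check of the degenerate overlaps ($w^-=u^+$ or $w^+=u^-$) is a welcome detail the paper leaves implicit.
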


\begin{proof}
Let $G$ be a graph as in the statement of the lemma and $C$ be a cycle in $G$. Let $v$ be a vertex in $N(C)$. We prove the statement by a short case distinction. The three cases are depicted in Figure~\ref{pic:ext}:

\begin{case}\label{I-ex}
There is a vertex $u \in V(C) \cap N(v)$ such that $u^+v \in E(G)$ or $ u^- v \in E(G)$.
\end{case}

Without loss of generality let us say $u^+v \in E(G)$.
We obtain $C'$ simply by exchanging $u u^+$ with $uv$ and $vu^+$, which completes Case~1.

\begin{case}
For all vertices $u\in V(C) \cap N(v)$ we have that $u^+ v , u^- v \notin E(G)$.
\end{case}

By the claw-freeness we know that $u^- u^+ \in E(G)$, but this means $G[u^-,u,u^+,v]$ is an induced paw.
Hence we get that $u^-$ or $u^+$ shares a neighbour $w$ with $v$ in ${V(G) \setminus \{u^-,u,u^+,v \}}$, say without loss of generality $u^+$.
Now we distinguish two subcases:

\begin{subcase}\label{IIa-ex}
There exists such a neighbour $w \notin V(C)$.
\end{subcase}

In this situation we simply obtain $C'$ by replacing $uu^+$ by $uv, vw$ and $wu^+$, completing Subcase~2.1.

\begin{subcase}\label{IIb-ex}
All such $w$ lie on $V(C)$.
\end{subcase}

Since we are in the second case, we get that $w^-  w^{+} \in E(G)$.
Hence we can get a new cycle $C'$ by replacing the segment $uCw^+$ of $C$ by $uvwu^{+}Cw^-w^+$.
This completes our case distinction.

Finally, note that in each case changing $C$ to $C'$ does not alter edges whose endvertices have distance at most $2$ from $v$. 
\end{proof}

Given the notation of Lemma~\ref{erweiterbar}, we call the cycle $C'$ a \emph{$v$-extension} of the cycle $C$ \emph{of type $(1)$} if $C'$ is formed as in Case~\ref{I-ex}.
Similarly, we call $C'$ a \emph{$v$-extension} of $C$ \emph{of type $(2.1)$} or \emph{of type $(2.2)$} if $C'$ is formed as in Subcase~\ref{IIa-ex} or~\ref{IIb-ex}, respectively.
For a $v$-extension we also call $v$ the \emph{target} and $u$ its \emph{base}.
We call a cycle $D$ an \emph{extension} of a cycle $C$ if we obtain $D$ from successively performing $v$-extensions (with possibly several different targets~$v$) of $C$ of any type.
Whenever we talk about a $v$-extension of type $(2.1)$ or $(2.2)$, we shall denote by $w$ the same vertex as in the proof of Lemma~\ref{erweiterbar}.
We call the edge we exclude from the cycle $C$ by forming a $v$-extension that has the base as one of its endvertices the \emph{foundation} (of the extension), see Figure~\ref{pic:ext}.

\begin{figure}[htbp]
\centering
\includegraphics[width=15cm]{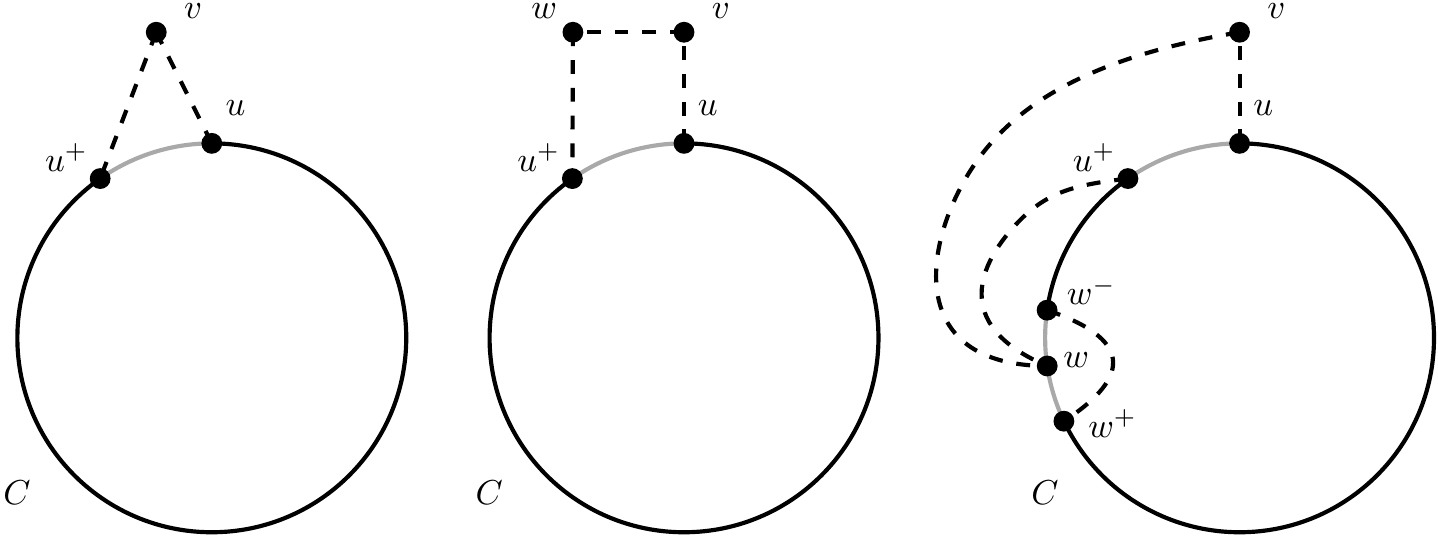} 
\caption{The three types of $v$-extensions of the cycle $C$ by replacing grey edges by dashed ones as occurring in the proof of Lemma~\ref{erweiterbar}.}
\label{pic:ext}
\end{figure}

Given a cycle $C$ and a $V(C)$-umbrella $\mathfrak{S}$ (cf.~Lemma~\ref{lem:KarlStr}), we now show that we can extend $C$ to contain all of $K_0$, but nothing from $\mathfrak{S}$.

\begin{lemma}\label{NotTrenner}
Let $G$ be an infinite, locally finite, $2$-connected, claw-free graph such that every induced paw of $G$ satisfies $\phi (a_1,b_{i} )$ for some $i \in \{ 1, 2 \}$.
Let $C$ be a cycle and $\mathfrak{S}$ a $V(C)$-umbrella.
Then there exists a cycle $C'$ which is an extension of $C$ such that $V(C') = V(K_0)$ and for each $e = xy \in E(C)$ with $x,y \in V(K_0) \setminus N_3 (N(C))$ we have that~$e \in E(C')$. 
\end{lemma}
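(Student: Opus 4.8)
The plan is to build $C'$ by repeatedly invoking Lemma~\ref{erweiterbar}, always taking as target a vertex of $K_0$ adjacent to the current cycle, and to show both that such extensions never leave $K_0$ and that they never touch the deep edges. Concretely, I would produce a finite sequence $C=C_0,C_1,\dots,C_m=C'$ in which each $C_{t+1}$ is a $v_t$-extension of $C_t$, maintaining the invariant $V(C_t)\subseteq V(K_0)$. As long as $V(C_t)\subsetneq V(K_0)$, the connectedness of $K_0$ (Lemma~\ref{lem:KarlStr}(2)) supplies a vertex $v_t\in V(K_0)\setminus V(C_t)$ adjacent to $C_t$, so Lemma~\ref{erweiterbar} applies and yields $C_{t+1}$ with $V(C_t)\cup\{v_t\}\subseteq V(C_{t+1})\subseteq V(C_t)\cup\{v_t,w\}$. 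Since $\abs{V(C_{t+1})}>\abs{V(C_t)}$ and $K_0$ is finite, the process terminates with $V(C')=V(K_0)$, and $C'$ is by construction an extension of $C$.

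The step I expect to be the main obstacle is to verify that the auxiliary vertex $w$ produced by a type $(2.1)$ extension stays inside $K_0$ rather than falling into $\mathfrak{S}$; this is exactly what keeps the cycle disjoint from $\mathfrak{S}$. Here $w$ is a common neighbour of the target $v\in V(K_0)$ and one of $u^+,u^-$ — say $u^+\in V(C_t)\subseteq V(K_0)$ — with $v\not\sim u^+$. Suppose for contradiction that $w\in\mathfrak{S}$, say $w\in S_j$. By Lemma~\ref{lem:KarlStr}(1) the vertex $w$ has a neighbour $p\in K_j$. Since $v,u^+\in K_0$ and $p\in K_j$ with $j\neq 0$ all lie outside $\mathfrak{S}$, there can be no edge joining $p$ to $v$ or to $u^+$, as such an edge would connect the distinct components $K_0$ and $K_j$ of $G-\mathfrak{S}$. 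Combined with $v\not\sim u^+$, this makes $\{v,u^+,p\}$ three pairwise non-adjacent neighbours of $w$, so $G[w,v,u^+,p]$ is an induced claw, contradicting claw-freeness. Hence $w\notin\mathfrak{S}$, and as $w\sim u^+\in K_0$ we conclude $w\in K_0$, so the invariant is preserved at every step.

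It remains to secure edge preservation. By the ``Furthermore'' clause of Lemma~\ref{erweiterbar}, each $v_t$-extension alters only edges having an endpoint in $N_2(v_t)$, so it suffices to show that no target comes within distance $2$ of a vertex $x\in V(C)$ with $x\notin N_3(N(C))$. The key observation is that such a deep $x$ has all its neighbours on $C$: an off-cycle neighbour would lie in $N(C)$ at distance $1$ from $x$, contradicting $x\notin N_3(N(C))$. Now every target satisfies $v_t\in V(K_0)\setminus V(C_t)\subseteq V(G)\setminus V(C)$, so $v_t\notin V(C)$. If $v_t\in N(C)$ then the distance from $v_t$ to $x$ is at least the distance from $N(C)$ to $x$, which is at least $4$. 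If $v_t\notin N(C)$, a short inspection of a hypothetical $v_t$--$x$ path of length at most $2$ forces a cycle vertex adjacent to $v_t$ (using that $x$ and its neighbours lie on $C$), hence $v_t\in N(C)$, a contradiction; so this distance is at least $3$. In either case $x\notin N_2(v_t)$, and likewise for the other endpoint $y$ of a deep edge $e=xy$. Therefore no extension alters $e$, so $e\in E(C')$. Combining termination, the claw argument placing $w$ in $K_0$, and this distance buffer completes the proof.
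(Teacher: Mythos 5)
Your proposal is correct and follows essentially the same strategy as the paper: greedily perform $v$-extensions with targets in $K_0$, observe that the only danger is the auxiliary vertex $w$ of a type $(2.1)$ extension landing in $\mathfrak{S}$, rule that out using claw-freeness, and invoke the ``Furthermore'' clause of Lemma~\ref{erweiterbar} for the deep edges. The only (cosmetic) difference is in the exclusion of $w\in S_j$: you exhibit an induced claw at $w$ using a neighbour in $K_j$, whereas the paper applies Lemma~\ref{lem:Ncomplete} to conclude that $v$ and the foundation's endpoint are adjacent, so a type $(1)$ extension would have been available — two equivalent uses of claw-freeness.
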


\begin{proof}
We successively perform $v$-extensions of extensions of $C$ with targets in $V(K_0)$ where, for a fixed target $v$, we always try perform a $v$-extension of type $(1)$.
If that is not possible we try to use one of type $(2.1)$, and if that is not possible either we use one of type $(2.2)$.
The only case which might lead to a problem for the desired equality $V(C') = V(K_0)$ is by using $v$-extensions of type $(2.1)$ since we not only incorporate the target $v$ but one further vertex $w \in N(C)$.
So let us assume we perform a $v$-extension of type $(2.1)$ where $w$ lies in $\mathfrak{S}$.
Thus, $w$ lies in a separator $S_j$ as defined in Lemma~\ref{lem:KarlStr}.
This means that $v$ and $y$, where $uy$ is the foundation of the $v$-extension, are connected since the neighbourhood of $w$ in the component of $G-S_j$ containing $K_0$ forms a clique, due to Proposition~\ref{lem:Ncomplete}.
Hence we did not need to incorporate $v$ via an extension of type $(2.1)$, but could have done it via type $(1)$.
So our process terminates and yields cycle $C'$ such that $V(C') = V(K_0)$  since $K_0$ is finite and connected due to Lemma~\ref{lem:KarlStr}.

Note that for each $e = xy \in E(C)$ such that $x,y \in V(K_0) \setminus N_3 (N(C))$ we have that $e \in E(C')$ holds due to Lemma~\ref{erweiterbar}.
Hence, the cycle $C'$ is as desired.
\end{proof}

Before we move on, we first give a definition used to capture how to carefully extend the cycle obtained from Lemma~\ref{NotTrenner} further into the infinite components $K_1, \ldots, K_k$ as defined in Lemma~\ref{lem:KarlStr} in a convenient way.

\begin{definition}
Let $G$ be an infinite, locally finite, connected, claw-free graph, $C$ be a cycle of $G$ and $\mathfrak{S}$ be a $V(C)$-umbrella.
Furthermore, let $k$, $S_j$ and $K_j$ be defined as in Lemma~\ref{lem:KarlStr}.
Now we call a tuple $(D, M_1, \ldots, M_k)$ \textit{promising} if the following hold for every $j \in \{1, \ldots, k\}$:
\begin{enumerate}
	\item $D$ is a cycle.
	\item $M_j := S_j \cup V(K_j)$.
 	\item $V(K_0) \cup \bigcup_{1 \leq i \leq k} (N_3(S_i) \cap V(K_i)) \subseteq V(D)$   
	\item $\vert E(D) \cap \delta(M_j) \vert = 2$. ($(\star)$ - condition)
\end{enumerate}
\end{definition}

Note that the definition of a promising tuple is defined relative to a certain umbrella.
We shall not mention to which if the context makes this clear.

In the next two lemmas we show how to carefully extend a cycle in two steps to incorporate at least $N_3 (\mathfrak{S})$ while respecting the $(\star)$ - condition.
Now we first prove that promising tuples exist.

\begin{lemma}\label{CircleToPromising}
Let $G$ be an infinite locally finite, connected, claw-free graph such that every induced paw of $G$ satisfies $\phi (a_1,b_{i})$ for some $i \in \{ 1, 2 \}$. 
Furthermore, let $C$ be a cycle of $G$ and $\mathfrak{S}$ be a $V(C)$-umbrella.
Also, let $k$, $S_j$ and $K_j$ be defined as in Lemma~\ref{lem:KarlStr}. 

Then there is a promising tuple $(D, M_1, \ldots, M_k)$ such that for all $e = xy \in E(C)$ with $x,y \in V(K_0) \setminus N_3 (N(C))$ we have $e \in E(D)$.
\end{lemma}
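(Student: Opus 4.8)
The plan is to start from the cycle supplied by Lemma~\ref{NotTrenner} and then to push it, one infinite component at a time, into the $K_j$'s by controlled applications of the extension lemma, Lemma~\ref{erweiterbar}. First I would apply Lemma~\ref{NotTrenner} to obtain a cycle $C'$, an extension of $C$, with $V(C') = V(K_0)$ that still contains every edge $xy\in E(C)$ with $x,y\in V(K_0)\setminus N_3(N(C))$. Putting $M_j := S_j\cup V(K_j)$ as the definition demands, the inclusion $V(C')\subseteq V(K_0)$ gives $|E(C')\cap\delta(M_j)| = 0$ for every $j$, and the task is to raise each of these to exactly $2$ while covering $N_3(S_j)\cap V(K_j)$. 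Because the separators $S_1,\dots,S_k$ are pairwise disjoint and $K_j$ meets the rest of $G$ only through $S_j$, the components can be handled one after another: every modification made while dealing with $K_j$ touches only edges inside $M_j$ or edges between $K_0$ and $S_j$, and none of these lies in $\delta(M_{j'})$ for $j'\neq j$, so earlier counts are preserved.

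Fix $j$. The core step is to create a single excursion of the cycle into $M_j$ that covers all of $N_3(S_j)\cap V(K_j)$. Since every vertex of $S_j$ has a neighbour in $K_0$, some $s_1\in S_j$ lies in $N(C')$, and Lemma~\ref{erweiterbar} applied with target $s_1$ turns the cycle into one meeting $M_j$ in a single arc with exactly two incident edges in $\delta(M_j)$. I would then poke into $K_j$ by incorporating a vertex $v\in N(s_1)\cap V(K_j)$, which is now adjacent to the cycle. As the neighbours of a $K_j$-vertex all lie in $M_j$, a type-$(1)$ extension changes only edges inside $M_j$; and when a type-$(2)$ extension is forced, the induced paw responsible has its pendant vertex in $K_j$, so the common neighbour delivered by the paw hypothesis again lies in $M_j$ --- in the one relevant case it is a second separator vertex $s_2\in S_j$, which is exactly what lets the arc turn back towards $K_0$ while still crossing $\delta(M_j)$ only twice. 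Using that $2$-connectivity forces $|S_j|\ge 2$ and that $K_j$ is connected, one obtains an excursion $\dots a\, s_1\,(\text{arc in }K_j)\, s_2\, a'\dots$ with $a,a'\in V(K_0)$, so that $|E(\cdot)\cap\delta(M_j)| = 2$. Finally I would grow this arc to swallow the remaining vertices of $N_3(S_j)\cap V(K_j)$ by repeated extensions with targets in $K_j$, taken in order of increasing distance from $S_j$ so that each new target is adjacent to a vertex of the arc already lying in $K_j$; the completeness of $G[N(s)\cap V(K_j)]$ from Lemma~\ref{lem:Ncomplete} makes these incorporations realisable by type-$(1)$ extensions, or by type-$(2)$ extensions confined to $M_j$, so that every modified edge stays inside $M_j$ and the single-excursion property, hence the count $2$, is preserved.

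The step I expect to be the main obstacle is exactly this control of the cut: I must guarantee that enlarging the arc never opens a second excursion into $M_j$, which would push $|E(D)\cap\delta(M_j)|$ up to $4$. The device that prevents this is the one already used in Lemma~\ref{NotTrenner}: whenever an extension threatens to route the cycle out of $M_j$ through a fresh separator vertex, the completeness of $G[N(s)\cap V(K_j)]$ shows the intended target is reachable instead by a type-$(1)$ extension internal to $M_j$, so no superfluous crossing appears. Local finiteness together with the finiteness of $S_j$ makes $N_3(S_j)\cap V(K_j)$ finite, so each component is finished in finitely many extensions, and there are finitely many components. No extension ever deletes a vertex, and by the distance clause of Lemma~\ref{erweiterbar} all alterations stay near $\mathfrak{S}$, so $D$ still contains every edge retained by $C'$. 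Hence the final cycle $D$ contains $V(K_0)\cup\bigcup_{i}(N_3(S_i)\cap V(K_i))$, keeps the required edges of $C$, and satisfies $|E(D)\cap\delta(M_j)| = 2$ for all $j$; that is, $(D,M_1,\dots,M_k)$ is a promising tuple of the desired form.
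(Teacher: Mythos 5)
Your strategy coincides with the paper's: start from the cycle of Lemma~\ref{NotTrenner}, thread a single two-crossing excursion through each $S_j$ into $K_j$, and then grow it inside $K_j$ while counting crossings of $\delta(M_j)$. The growth phase inside $K_j$ is essentially sound, because for a target $v\in V(K_j)$ the vertex $w$ supplied by Lemma~\ref{erweiterbar} lies in $N(v)\subseteq M_j$; but even there your claim that every modified edge stays inside $M_j$ is not literally true for type $(2.2)$ extensions with $w\in S_j$: replacing $w^-w$ and $ww^+$ by $w^-w^+$ can move a crossing of $\delta(M_j)$ from one edge to another, and one has to check the possible positions of $w^-,w^+$ relative to $M_j$ to confirm the count of two survives.

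The genuine gap is in the phase where you incorporate a separator vertex. When the target is $v\in S_j$ and Lemma~\ref{erweiterbar} forces a type $(2.1)$ or $(2.2)$ extension, the common neighbour $w$ it delivers is only known to lie in $N(v)\cap N(u^{\pm})$; nothing confines it to $M_j\cup V(K_0)$, and it can lie in a \emph{different} separator $S_i$. Your assertion that ``every modification made while dealing with $K_j$ touches only edges inside $M_j$ or edges between $K_0$ and $S_j$, and none of these lies in $\delta(M_{j'})$ for $j'\neq j$'' therefore fails: the new edge $vw$ with $v\in S_j$ and $w\in S_i$ lies in $\delta(M_i)$, and the cycle acquires a third vertex of $S_i$ together with an edge of $G[S_i]$. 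Lemma~\ref{lem:Ncomplete} excludes this only when the foundation endpoint $r$ is not itself in $S_i$; if component $i$ has already been processed, the foundation can be an edge $ur\in\delta(M_i)$ with $r\in S_i$, and the situation genuinely occurs. The paper resolves it by a rerouting step that your one-component-at-a-time induction also needs: delete the previously built $S_i$--$K_i$--$S_i$ excursion through $r$ and rebuild it as an $s$--$w$ path through $K_i$, restoring the invariant that the cycle meets $S_i$ in exactly two vertices and contains no edge of $G[S_i]$, before returning to $S_j$. Without this case your induction does not close, since the invariants you rely on for ``earlier counts are preserved'' are exactly the ones this configuration destroys.
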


\begin{proof}
Let $C'$ be an extension of $C$ such that $V(C') = V(K_0)$, which exists by Lemma~\ref{NotTrenner}.
We shall successively form $v$-extensions with targets in $\mathfrak{S}$ or replace edges $ab$ in some $S_j$ with $a$--$b$ paths whose inner vertices lie in $K_i$ until we have a cycle $C''$ containing some vertex of each $K_i$, precisely two vertices of each $S_i$, but no edge from any $G[S_i]$, while respecting the {$(\star)$ - condition}.
Let us first prove that having such a cycle $C''$ suffices to prove this lemma. 

Assume we have such a cycle $C''$, which fulfills the $(\star)$ - condition. 
We now show that we can include any finite vertex set $X$ from an arbitrary $K_i$ for $i \in \{1, \ldots, k\}$, hence especially $N_m(S_i) \cap V(K_i)$ for any $m\in \mathbb{N}$ while maintaining the {$(\star)$ - condition}.
For this we only target $v \in V(K_i)$ along a finite connected subgraph within $K_i$ containing $X$.
Hence, the bases of our extensions will always lie in $M_i$.
Whenever possible we include the target via a type $(1)$ extension.
If this is not possible, we try to incorporate the target by a type $(2.1)$ extension and if this is also not possible, we use a type $(2.2)$ extension.

In case we perform a $v$-extension of type $(1)$, no altered edge has an endvertex in $V(K_0)$.
Thus, this does not affect the $(\star)$ - condition.
If, for a $v$-extension of type $(2.1)$, the foundation lies in $M_i$, then again we do not alter any edge with an endvertex in $V(K_0)$.
The other case is a foundation $ur \in \delta(M_i)$ for $r \in \{u^+, u^-\}$.
But this means that we exclude $ur$ from the cut $\delta(M_i)$ and add $rw$ to it.
So again we maintain the $(\star)$ - condition.  
Finally let us consider a $v$-extension of type $(2.2)$.
For $w \in V(K_i)$, we do not affect the $(\star)$ - condition.
So let us assume $w$ lies in the separator $S_i$.
As for an extension of type $(2.1)$, the removal of the foundation $ur$ for $r \in \{u^+, u^-\}$ and incorporating the edges $uv, vw$ and $wr$ does not change the size of the cut.
It remains to check that excluding $w^-w$ and $ww^+$ and adding $w^-w^+$ does not violate the $(\star)$ - condition either.
We check this by a short case distinction:

\begin{abccase}
$w^-, w^+ \in M_i$.
\end{abccase}

In this case the edges $w^-w, ww^+$ and $w^-w^+$ all lie in $M_i$, so the $(\star)$ - condition is maintained.

\begin{abccase}
$w^-, w^+ \in V(G) \setminus M_i$.
\end{abccase}

This case cannot happen since we assumed that $C''$ contains a vertex of $K_i$ and precisely two vertices from $S_i$.
However, $C''$ would cross $\delta(M_i)$ via the edges $w^-w$ $ww^+$ without entering $K_i$.
So the $(\star)$ - condition would already be violated by $C''$; a contradiction.

\begin{abccase}
$w^- \in M_i$ but $w^+ \notin M_i$ (or vice versa).
\end{abccase}

In this case our $v$-extension does not intersect $\delta(M_i)$ with the edge $w^+ w$ anymore, but with the edge $w^-w^+$.
Hence, the $(\star)$ - condition is again maintained.
This completes our case analysis and shows the existence of a desired tuple $(D, M_1, \ldots, M_k)$ if the cycle $C''$ exists.
Note that for each $e = xy \in E(C)$ with $x,y \in V(K_0) \setminus N_3 (N(C))$ we have $e \in E(D)$, as shown in Lemma~\ref{erweiterbar}.

Hence it remains to show that a cycle $C''$ exist, i.e., a cycle containing all of $V(K_0)$, at least some vertex of $K_i$ and precisely two vertices from each $S_i$, but no edge from any $G[S_i]$, while respecting the $(\star)$ - condition.
Say we already have a cycle $Z$ satisfying the following for a (possibly empty) subset $I \subseteq \{ 1, \ldots, k \}$.
\begin{enumerate}[label=(\roman*)]
    \item $V(K_0) \subseteq V(Z)$.\label{con1}
    \item $V(Z) \cap V(K_i) \neq \emptyset$ for every $i \in I$.\label{con2}
    \item $|V(Z) \cap S_i| = 2$, but $E(Z) \cap E(G[S_i]) = \emptyset$ for every $i \in I$.\label{con3}
    \item $\vert E(Z) \cap \delta(M_i) \vert = 2$ for every $i \in I$.\label{con4}
    \item $V(Z) \cap M_j = \emptyset$ for every ${j \in \{1, \ldots, k \} \setminus I}$.
\end{enumerate}

Note that $C'$ meets these five conditions with $I = \emptyset$.
Next we show how to obtain a cycle meeting all these five conditions for a superset of $I$.
We form a $v$-extension $Z'$ of $Z$ with target $v \in S_j$ for some ${j \in \{1, \ldots, k \} \setminus I}$.
Again we analyse the situation via a case distinction.

\setcounter{case}{0}
\begin{case}
$Z'$ can be formed of type $(1)$.
\end{case}

Without loss of generality say the foundation of $Z'$ is $uu^+$.
If $uu^+ \in E(K_0)$, then condition~\ref{con4} from above is still maintained.
If $uu^+ \in \delta(M_i)$ for some $i \in I$, the size of the intersection of the cycle with the cut $\delta(M_j)$ stays the same, but the edge $uv$  meets it instead of $uu^+$.
In both cases $Z'$ now fulfills condition~\ref{con4} for $I \cup \{ j \}$.
Next we form a $v'$-extension $Z''$ of $Z'$ where $v' \in N(v) \cap V(K_j)$.
Such a $v'$ exists since $S_j$ is a minimal vertex separator by Lemma~\ref{lem:KarlStr}.
But now $v'$ can only be included as in type $(2.1)$, since extensions of type $(1)$ and $(2.2)$ need two neighbours of the target on the cycle from which the extension is formed.
Hence, $w$ lies in $S_j$, again we do not change the size of any intersection $ E(Z') \cap \delta(M_i) $ with $i \in \{1, \ldots, k \}$ and we do not incorporate any edge within~$G[S_j]$.
So $Z''$ satisfies all five condition above with respect to $I \cup \{ j \}$.
This completes the analysis of the first case.

Note that in the situation where we cannot perform a $v$-extension of $Z$ of type $(1)$ we can, by the proof of Lemma~\ref{erweiterbar}, fix a desired base $u \in N(v)$ on $Z$ in advance.

\begin{case}
$Z'$ cannot be formed of type $(1)$, but of type $(2.1)$ with base $u \in V(K_0)$.
\end{case}

Now we further distinguish two subcases.

\begin{subcase}\label{sub2.1}
$w \in S_j$.
\end{subcase}

In this case we have incorporated two vertices of $S_j$, meet condition~\ref{con4} also for $j$, but we use the edge $vw$ from $G[S_j]$.
As $S_j$ is a minimal vertex separator by Lemma~\ref{lem:KarlStr}, both of these vertices have a neighbour in $K_j$.
Using that $K_j$ is connected, we can find a $v$--$w$ path $P$ whose inner vertices lie in $K_j$.
Now we modify $Z'$ by replacing $vw$ with $P$.
The resulting cycle is as desired.

\begin{subcase}
$w \in S_i$ for some $i \neq j$.
\end{subcase}

For the sake of clarity we again distinguish two further subcases depending on where the foundation of the $v$-extension lies.
Let us denote the foundation by $ur$ where $r\in \{u^-, u^+\}$.

\begin{subsubcase}
$r \notin S_i$.
\end{subsubcase}

This is not possible since the neighbourhood of $w$ in each component of $G-S_i$ induces a clique.
Since we demand for $u$ as the base of $Z'$ to lie in $V(K_0)$, we know that both $u$ and $r$ lie in the same component of $G-S_i$, namely the one different from $K_i$.
Hence, $vr \in E(G)$ and we could have included $v$ as in type $(1)$ against our assumption.

\begin{subsubcase}\label{sub2.2.2}
$r \in S_i$.
\end{subsubcase}

In this case we know that $i \in I$.
So $ur \in \delta(M_i)$ holds.
By condition~\ref{con3} we know that $|V(Z) \cap S_i| = 2$.
Without loss of generality, say ${V(Z) \cap S_i = \{ r, s \}}$ for some other vertex $s \in S_i$.
As $Z$ satisfies condition~\ref{con1}, \ref{con2} and \ref{con3}, we know that $Z$ contains precisely one path $Q$ with endvertices in $S_i$ and all inner vertices in $K_i$.
Hence, $Q$ must be an $s$--$r$-path.
Now form the $v$-extension $Z'$ of $Z$ of type $(2.1)$, then delete all inner vertices of $Q$ and $r$ from $Z'$ and replace it by an $s$--$w$-path all whose inner vertices lie in $K_i$.
The resulting cycle contains only $v$ from $S_j$ and we can proceed as in Case~1.
This completes the analysis for the second case.

\begin{case}
$Z'$ cannot be formed of type $(1)$, but of type $(2.2)$ with base $u \in V(K_0)$.
\end{case}

Note that in this case $w \in V(Z)$ and $w$ does lie in $S_i$ with $i \in I$ as due to condition~\ref{con2} of $Z$ the edge $w^- w^+$ would cross the separator $S_i$; a contradiction.
Hence, $w \in V(K_0)$ holds.
The only possibility that ${E(Z') \cap \delta(M_{\ell}) \neq E(Z) \cap \delta(M_{\ell})}$ for some ${\ell} \in I$ is that one of $w^+, w^-$ is contained in $S_{\ell}$ while the other vertex is contained in the component of $G-S_{\ell}$ different from $K_{\ell}$.
Say $w^+ \in S_{\ell}$ holds.
Note that not both, $w^+$ and $w^-$ can lie in $S_{\ell}$ since $V(C) \subseteq V(K_0) \cap V(Z')$.
Hence, $|E(Z') \cap \delta(M_{i})| = 2$ holds for all $i \in I \cup \{ j \}$ and $Z'$ contains precisely $v$ from $S_j$.
To complete the argument for this case we can now proceed as in Case~1.
This completes our case analysis and shows the existence of the desired cycle~$C''$.

Finally, note that the constructed promising tuple $(D, M_1, \ldots, M_k)$ satisfies that for each $e = xy \in E(C)$ with $x,y \in V(K_0) \setminus N_3 (N(C))$ we have $e \in E(D)$.
We either performed $v$-extensions, which cause no problems as checked in Lemma~\ref{erweiterbar}.
Apart from that we included or excluded paths from $V(K_i) \cup S_i$ in Subcase~\ref{sub2.1} and Subcase~\ref{sub2.2.2}, which does not affect edges from $C$.
\end{proof}

From Lemma~\ref{CircleToPromising} we do not necessarily get a cycle that contains all vertices of $\mathfrak{S}$.
However, in order to apply Lemma~\ref{lem:KarlZiel} we have to incorporate all these vertices while maintaining constraints for suitable cuts $\delta(M_i)$.
With the next lemma we shall achieve this.
The key idea is to start from a promising tuple and incorporate the remaining vertices while dynamically changing the vertex sets $M_i$ to maintain the constraints for the cuts~$\delta(M_i)$.
We shall encode our desired objects via the following definition before we move on to the next lemma.

\begin{definition}
Let $G = (V, E)$ be an infinite locally finite, connected, claw-free graph, $C$ be a cycle of $G$ and $\mathfrak{S}$ be a $V(C)$-umbrella. Furthermore, let $k$, $S_j$ and $K_j$ be defined as in Lemma~\ref{lem:KarlStr}. Now we call a tuple $(D, M_1, \ldots, M_k)$ \textit{good} if the following properties hold for every $j \in \{1, \ldots, k\}$:
\begin{enumerate}
	\item $D$ is a cycle of $G$ which contains $V(K_0)$ and $S_{j} \cup (N_3(S_{j}) \cap V(K_{j}))$.\label{prop1}
	\item $ {V(K_j) \setminus N(S_j) \subseteq M_j \subseteq V(K_j) \cup \mathfrak{S} \cup N ( \mathfrak{S})}$.\label{prop2}
	\item ${|E(D) \cap \delta(M_{j})| = 2}$. ($(\star)$ - condition)
\end{enumerate}
\end{definition}

\begin{lemma}\label{PromisingToGood}
Let $G$ be an infinite locally finite, connected, claw-free graph, such that every induced paw of $G$ satisfies $\phi (a_1,b_{i})$ for some $i \in \{ 1, 2 \}$.
Furthermore, let $C$ be a cycle of $G$ and $\mathfrak{S}$ be a $V(C)$-umbrella, where $k$, $S_j$ and $K_j$ be defined as in Lemma~\ref{lem:KarlStr}.
Let $(D, M_1, \ldots, M_k)$ be a promising tuple for the $V(C)$-umbrella $\mathfrak{S}$.

Then there is a good tuple $(D', N_1, \ldots, N_k)$ (for the same umbrella $\mathfrak{S}$) and for each $e = xy \in E(C)$ such that $x,y \in V(K_0) \setminus N_3 (N(C))$ we have $e \in E(D')$.
\end{lemma}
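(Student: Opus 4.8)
The plan is to start from the given promising tuple $(D, M_1, \ldots, M_k)$ and incorporate, one component $K_j$ at a time, all remaining vertices of the separator $S_j$ that do not yet lie on the cycle, while simultaneously adjusting the vertex set $M_j$ so that the $(\star)$ - condition is preserved throughout. Since the promising tuple already guarantees $V(K_0) \cup \bigcup_i (N_3(S_i) \cap V(K_i)) \subseteq V(D)$ and $|E(D) \cap \delta(M_j)| = 2$, the two properties I still need to achieve for a good tuple are that $D'$ contains all of $S_j$ (not just two of its vertices) together with $N_3(S_j) \cap V(K_j)$, and that the modified $N_j$ satisfies the sandwich inclusion $V(K_j) \setminus N(S_j) \subseteq N_j \subseteq V(K_j) \cup \mathfrak{S} \cup N(\mathfrak{S})$. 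Because the components are handled independently (their vertex sets and separators are disjoint by Lemma~\ref{lem:KarlStr}), I would fix one $j$ and argue locally, then note the same argument applies verbatim to each $j$.

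First I would enumerate the vertices of $S_j$ not yet on $D$ and include them successively via $v$-extensions, using Lemma~\ref{erweiterbar} to control which edges change. The crucial subtlety is that each such vertex $s \in S_j$ has neighbours both in $K_j$ and in the $K_0$-side, so a naive extension may route the cycle across $\delta(M_j)$ too many times. The remedy, which I expect to be the heart of the proof, is to \emph{dynamically redefine} $M_j$ as the cycle grows: whenever an extension would add a fresh crossing of $\delta(M_j)$, I shift the offending boundary vertex (a vertex of $S_j$ or its neighbourhood) from one side of the cut to the other, thereby absorbing the new crossing into the cut in a way that keeps $|E(D') \cap \delta(N_j)| = 2$. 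This is the same bookkeeping performed in the case analysis of Lemma~\ref{CircleToPromising}, but now running in the opposite direction: there the cut was fixed and the cycle was steered around it, whereas here the cut itself is the adjustable object. I would verify that after each redefinition the inclusion $V(K_j) \setminus N(S_j) \subseteq N_j \subseteq V(K_j) \cup \mathfrak{S} \cup N(\mathfrak{S})$ still holds, which is automatic because we only ever move vertices of $S_j$ (hence in $\mathfrak{S}$) or their neighbours (hence in $N(\mathfrak{S})$) across the boundary, never any vertex of $V(K_j) \setminus N(S_j)$.

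To guarantee that $N_3(S_j) \cap V(K_j) \subseteq V(D')$ is not lost during this process, I would use the claw-freeness via Proposition~\ref{lem:Ncomplete} and Lemma~\ref{lem:Ncomplete}: the neighbourhood of any $s \in S_j$ inside the component $K_j$ induces a clique, so the local extensions needed to pick up the new separator vertices can be realised without deleting the segments of $D$ that already cover $N_3(S_j) \cap V(K_j)$. Concretely, every extension I perform here is either of type $(1)$ or routes a short path through $K_j$ between two separator vertices, exactly as in Subcase~\ref{sub2.1} of the previous proof, and by Lemma~\ref{erweiterbar} such operations only alter edges incident to $N_2$ of the newly targeted vertex, which lies in $S_j \cup N(S_j)$ and is therefore disjoint from the protected deep interior of $K_j$. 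Finally, the edge-preservation clause is immediate: every modification is confined to $M_j \cup N(S_j)$, so no edge $e = xy \in E(C)$ with $x, y \in V(K_0) \setminus N_3(N(C))$ is ever touched, and such edges already lay on $D$ by the hypothesis on the promising tuple, hence they survive in $D'$.

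The main obstacle I anticipate is making the dynamic reassignment of $M_j$ precise enough that the $(\star)$ - condition is provably maintained at \emph{every} intermediate step and not merely at the end; in particular I must rule out, as Case~B of the earlier lemma does, the degenerate situation where both cut-crossings of $D'$ would sit on the wrong side of the redefined boundary, leaving $K_j$ unentered. I would handle this by an invariant argument: carry through the induction on the number of incorporated separator vertices the statement that $D'$ contains exactly one maximal subpath with both endpoints in $S_j$ and all interior in $K_j$, so that $\delta(N_j)$ is crossed precisely twice, and check that each extension preserves this invariant by the same three-case split ($w^-, w^+$ both inside, both outside, or split across $N_j$) already used for the type $(2.2)$ analysis in Lemma~\ref{CircleToPromising}.
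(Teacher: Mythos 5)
Your proposal is correct and follows essentially the same strategy as the paper's proof: successively perform $v$-extensions with targets in $\mathfrak{S}\setminus V(D)$, preferring type $(1)$ over $(2.1)$ over $(2.2)$, and dynamically update the sets $M_j$ at each step by moving only vertices of $\mathfrak{S}\cup N(\mathfrak{S})$ across the cut so that the $(\star)$ - condition and the sandwich inclusion are preserved throughout. The paper makes your ``dynamic reassignment'' precise via explicit per-step update rules for each extension type, keyed to whether the endpoints of the foundation edge lie in the current $M_r$, but the underlying idea and the verification are the same as yours.
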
 

\begin{proof}
We prove the statement via a recursive construction always performing $v$-extensions with targets $v \in \mathfrak{S} \setminus V(D)$ until we eventually obtain the cycle $D'$ of our desired good tuple.
We initialise this construction with the promising tuple $(D, M_1, \ldots, M_k)$.
Let us denote by ${ }^pD$ the cycle obtained after $p$ many performed $v$-extensions.
During this process we also alter the sets $M_i$.
Let ${ }^pM_i$ denote the corresponding vertex set after having performed $p$ many $v$-extensions for every $i \in \{ 1, \ldots, k \}$.
We note that this construction process will eventually terminate, say after $z \leq \vert \mathfrak{S} \setminus V(D) \vert$ many steps, since in each step we add at least one vertex from the finite set $\mathfrak{S} \setminus V(D)$ and do not exclude any vertices at all. 

Whenever possible we include our target vertices via a type $(1)$ extension. If this is not possible, we try to incorporate them via a type $(2.1)$ extension and if this is also not possible, we use an extension of type $(2.2)$. 

Now suppose we have already constructed ${ }^pD$ and ${ }^pM_i$ for all $i \in \{ 1, \ldots, k \}$ such that the $(\star)$ - condition is maintained and let $v \in S_i \setminus V({ }^pD)$ be our next target. 

\setcounter{case}{0}
\begin{case}
There exists a $v$-extension ${ }^{p+1}D$ of ${ }^pD$ of type $(1)$.
\end{case}

Let $xy$ be the foundation of ${ }^{p+1}D$.
Recall that this edge gets substituted by $xv$ and $vy$ by forming the extension.
We define for all $r \in \{1, \ldots, k\}$ the set
\begin{equation*}
	{ }^{p+1}M_r=\begin{cases}
		{ }^{p}M_r    \setminus \{v\}	& \text{ if } x, y \notin { }^pM_r    \\
		{ }^{p}M_r  \cup \{v\}			&\text{ else }\\
	\end{cases}
\end{equation*}
 
This ensures that the $(\star)$ - condition is still maintained for ${ }^{p+1}D$ and each ${ }^{p+1}M_r$.

\begin{case}
There exists no $v$-extension of ${ }^pD$ of type $(1)$, but of type $(2.1)$.
\end{case}

Let $ux$ be the foundation of ${ }^{p+1}D$ where $u$ is the base of the extension.  
Now we define for all $r \in \{1, \ldots, k\}$:

\begin{equation*}
	{ }^{p+1}M_r = 
	\begin{cases}
		{ }^{p}M_r  	\cup \{v,w\}   	&\text{if $u,x \in { }^{p}M_r $}\\
		{ }^{p}M_r  \setminus \{w\} 	\cup \{v\} 	&\text{if $u \in { }^{p}M_r$ and  $x \not \in { }^{p}M_r$}\\
		{ }^{p}M_r  \setminus \{v \}		\cup\{w\}		&\text{if $u \not\in { }^{p}M_r$  and $x \in { }^{p}M_r $}\\
		{ }^{p}M_r  \setminus \{v,w\}		&\text{if $u,x \not\in { }^{p}M_r $}
	\end{cases}
\end{equation*}

\vspace{7pt}

Again we note that these cases respect the $(\star)$ - condition: If $u$ and $x$ lie within one ${ }^pM_r$, the cycle ${ }^{p+1}D$ meets $\delta({ }^{p+1} M_r)$ still twice.
If the $ux \in \delta({ }^pM_s)$, we exclude $ux$ from the corresponding intersection but add precisely $vw$ to it.

\begin{case}
There exists no $v$-extension of ${ }^pD$ of type $(1)$, but of type $(2.2)$.
\end{case}

Again let $ux$ be the foundation of ${ }^{p+1}D$. 
Here we define the new sets for $r \in \{1, \ldots, k\}$ as in the case of an type $(2.1)$ $v$-extension:

\begin{equation*}
	{ }^{p+1}M_r = 
	\begin{cases}
		{ }^{p}M_r \cup \{v,w\}   		&\text{if $u,x \in { }^{p}M_i $}\\
		{ }^{p}M_r \setminus \{w\} 	\cup\{v\}			&\text{if $u \in { }^{p}M_i$ and  $x \not \in { }^{p}M_i $}\\
		{ }^{p}M_r  \setminus\{v\} 	\cup\{w\}			&\text{if $u \not\in { }^{p}M_i$  and $x \in { }^{p}M_i $}\\
		{ }^{p}M_r \setminus \{v,w\}					&\text{if $u,x \not\in { }^{p}M_i $}
	\end{cases}
\end{equation*}

\vspace{7pt}

To verify that the new cuts $\delta({ }^{p+1}M_r)$ together with ${ }^{p+1}D$ still satisfy the $(\star)$ - condition, we first note that we delete three edges from the cycle, namely: $w^-w, ww^+$ and $ux$.
But we include the edges $uv, vw, wx, w^-w^+$.
Regarding the intersection of the cycle and the cuts, the exclusion of $w^-w$ and $ww^+$ precisely cancels out the effect of adding $w^-w^+$. 
The same holds for excluding $ux$ and adding $uv, vw$ and $wx$ as we already in Case~2.
This completes the recursive definition of the cuts for the good tuple. 

We now define $D' := { }^{z}D$ and $N_r := { }^{z}M_r$ for every $r \in \{1, \ldots, k\}$.
It remains to check that $(D', N_1, \ldots, N_k)$ satisfies all properties of a good tuple.
We already argued that the tuple respects the $(\star)$ - condition.

Regarding property~(\ref{prop1}) note that we we started the recursive definition with a cycle $D$ that is part of a promising cycle for the $V(C)$-umbrella $\mathfrak{S}$.
So $D$ already contained all vertices from $V(K_0) \cup (N_3(S_{j}) \cap V(K_{j}))$ for every $j \in \{1, \ldots, k\}$, and we just included the remaining vertices from $ \mathfrak{S}$ when forming $D'$.

Note for property~(\ref{prop2}) that we have only added or excluded vertices in $\mathfrak{S} \cup N(\mathfrak{S})$ in each step when changing the vertex sets for our cuts.
Also note that the vertex set $M_j$ we started with were defined as $M_j = S_j \cup V(K_j)$.
\end{proof}

We now combine the previous lemmas to prove the main theorem.
Let us restate the statement of theorem first.

\infBroeVeld*

\begin{proof}
Let $G$ be a graph as in the statement of the theorem.
We may assume $G$ to be infinite by Theorem~\ref{thm:finBroe}. 
We shall recursively construct a sequence of good tuples $(C^i, M^i_1, \ldots, M^i_{k(i)})$ where each tuple $(C^{i+1}, M^{i+1}_1, \ldots, M^{i+1}_{k({i+1})})$ is defined with respect to a $V(C^i)$-umbrella for every $i \in \mathbb{N}$ as follows.

Start with an arbitrary cycle $A$ in $G$.
This is possible, since $G$ is 2-connected.
Let $C^0$ by an extension of $A$ with $N_3(V(A)) \subseteq V(C^0)$.

Next suppose the cycle $C^i$ has already been defined up to some $i \in \mathbb{N}$:

\begin{itemize}
	\item Let $\mathfrak{S}^{i+1}$ be a $V(C^i)$-umbrella, $K^{i+1}_i$ and $S^{i+1}_i$ be defined as in the Lemma~\ref{lem:KarlStr} and let $k: \mathbb{N} \rightarrow \mathbb{N}$ be the function such that $\mathfrak{S}^{i+1}$ leaves precisely $k(i+1)$ infinite components.
	\item Let $(D^{i+1}, Y^{i+1}_1, \ldots, Y^{i+1}_{k({i+1})})$ be a promising tuple we get by applying Lemma~\ref{CircleToPromising} with the cycle $C^i$ and the $V(C^i)$-umbrella $\mathfrak{S}^{i+1}$.
	\item Then set $(C^{i+1}, M^{i+1}_1, \ldots, M^{i+1}_{k({i+1})})$ to be a good tuple we get from Lemma~\ref{PromisingToGood} applied with the promising tuple $(D^{i+1}, Y^{i+1}_1, \ldots, Y^{i+1}_{k({i+1})})$, the cycle $C^i$ and the $V(C^i)$-umbrella $\mathfrak{S}^{i+1}$.
\end{itemize}

We now conclude the proof by verifying that we can apply Lemma~\ref{lem:KarlZiel}.

For condition~(\ref{ziel1}) of Lemma~\ref{lem:KarlZiel} we have to show that for every $v \in V(G)$ there is some $j \in \mathbb{N}$ such that $v \in V(C^i)$ for every $i \geq j$.
This holds since every $v \in V(G)$ has finite distance to $V(C^0)$.
So it follows from property~(\ref{prop1}) of good tuples.

For condition~(\ref{ziel2}) of Lemma~\ref{lem:KarlZiel} we need to prove that for every $i \geq 1$ and $j$ with $1 \leq j \leq k(i)$, the cut $\delta(M^i_j)$ is finite.
Since $G$ is locally finite, it suffices to show that $M^i_j$ has a finite neighbourhood.
Due to property~(\ref{prop2}) of good tuples we know that $N(M^i_j) \subseteq \mathfrak{S}^i \cup N_2(\mathfrak{S}^i)$.
Since $\mathfrak{S}^i$ is a finite set and $G$ is locally finite, we obtain that $N(M^i_j)$ is finite. 

Regarding condition~(\ref{ziel3}) of Lemma~\ref{lem:KarlZiel} we need to prove for every end $\omega$ of $G$ the existence of a function $f : \mathbb{N} \setminus \lbrace 0 \rbrace \longrightarrow \mathbb{N}$ such that the ${M^{j}_{f(j)} \subseteq M^i_{f(i)}}$ holds for all integers $i, j$ with $1 \leq i \leq j$ and that the equation ${M_{\omega}:= \bigcap^{\infty}_{i=1} \overline{M^i_{f(i)}} = \lbrace \omega \rbrace}$ is true.
To verify this let us fix an arbitrary end $\omega$ of $G$.
We first define the desired function $f$.
Note that for each $i \geq 1$ we know that $K^i_0$ and $\mathfrak{S}^i$ are finite by Lemma~\ref{lem:KarlStr}.
Hence each $\omega$-ray has a tail in $K^i_{\ell}$ for some $\ell \in \{ 1, \ldots, k(i) \}$.
Now set $f(i) := \ell$.

Let us now check that 
$M^{j}_{f(j)} \subseteq M^i_{f(i)}$ holds for all $1 \leq i \leq j$.
By properties~(\ref{prop1}) and (\ref{prop2}) of a good tuple, it is easy to see that $V(K_{f(j)}^j) \subseteq V(K_{f(i)}^i)$ holds for all $1 \leq i \leq j$ and similarly $M^{j}_{f(j)} \subseteq M^i_{f(i)}$.

To verify condition~$(\ref{ziel3})$ it remains to show that ${M_{\omega}:= \bigcap^{\infty}_{i=1} \overline{M^i_{f(i)}} = \lbrace \omega \rbrace}$ is true.
First note that no vertex can lie in this intersection since each vertex will eventually be contained in $K^t_0$ for some sufficiently large $t \in \mathbb{N}$, and hence not in any set $M^{\ell}_i$ for all $\ell \geq t$.
Furthermore, the definition of $f$ already ensures $\omega \in M_{\omega}$.
So let us consider some end $\omega'$ of $G$ distinct from $\omega$.
Let $S$ be a finite vertex set separating $\omega$ from $\omega'$.
Since $S$ is finite, we know that $S \subseteq V(K^t_0)$ holds for some sufficiently large $t \in \mathbb{N}$.
Hence $\omega' \notin \overline{K^t_{f(t)}}$ and similarly $\omega' \notin \overline{M^t_{f(t)}}$.
So we obtain the desired conclusion $\omega' \notin M_{\omega}$

Now we focus on condition~$(\ref{ziel4})$ of Lemma~\ref{lem:KarlZiel}.
There we need to verify the inclusion ${E(C^i) \cap E(C^j) \subseteq E(C^{j+1})}$ for all integers $i$ and $j$ with $0 \leq i < j$. 
This holds since we checked in Lemma~\ref{NotTrenner}, Lemma~\ref{CircleToPromising} and Lemma~\ref{PromisingToGood} that whenever we change a cycle $C$ to a cycle $C'$ each edge $e = xy \in E(C)$ with $x,y \in V(K_0) \setminus N_3 (N(C))$ lies also in $E(C')$.
So by property~\ref{prop1} of good tuples the sequence of cycles $(C^i)_{i \in \mathbb{N}}$ satisfies condition~$(\ref{ziel4})$ of Lemma~\ref{lem:KarlZiel}.

Finally, let us verify condition~$(\ref{ziel5})$ of Lemma~\ref{lem:KarlZiel}.
So we have to show that the equations ${E(C^i) \cap \delta(M^p_j) = E(C^p) \cap \delta(M^p_j)}$ and $|E(C^i) \cap \delta(M^p_j)| = 2$ hold for each triple $(i, p, j)$ which satisfies $1 \leq p \leq i$ and $1 \leq j \leq k(p)$.
This, however, immediately follows from the satisfied previous~$(\ref{ziel4})$ and the fact that good tuples satisfy the $(\star)$ - condition.

Hence, we can apply Lemma~\ref{lem:KarlZiel}, which proves the Hamiltonicity of $G$ and concludes our proof. 
\end{proof}

\section*{Acknowledgements}
Karl Heuer was supported by a postdoc fellowship of the German Academic Exchange Service (DAAD) and by the European Research Council (ERC) under the European Union's Horizon 2020 research and innovation programme (ERC consolidator grant DISTRUCT, agreement No.\ 648527).

Deniz Sarikaya is thankful for the financial and ideal support of the Studienstiftung des deutschen Volkes and the Claussen-Simon-Stiftung.

Furthermore, both authors would like to thank Max Pitz for a comprehensive feedback on an early draft of this paper.
Also they would like to thank Hendrik Niehaus and J.~Pascal Gollin for helpful comments on an early version of this article.

\begin{bibdiv}
\begin{biblist}

\bib{BroeVeld}{article}{
   author={Broersma, Hajo},
   author={Veldman, Henk Jan},
   title={Restrictions on induced subgraphs ensuring Hamiltonicity or
   pancyclicity of $K_{1,3}$-free graphs},
   conference={
      title={Contemporary methods in graph theory},
   },
   book={
      publisher={Bibliographisches Inst., Mannheim},
   },
   date={1990},
   pages={181--194},
   review={\MR{1126227}},
}

\bib{Diestel.Buch}{book}{
   author={Diestel, Reinhard},
   title={Graph theory},
   series={Graduate Texts in Mathematics},
   volume={173},
   edition={5},
   publisher={Springer, Berlin},
   date={2017},
   pages={xviii+428},
   isbn={978-3-662-53621-6},
   review={\MR{3644391}},
   doi={10.1007/978-3-662-53622-3},
}

\bib{Diestel.Arx}{article}{
	author={Diestel, Reinhard}, 
	title={Locally finite graphs with ends: a topological approach}, 
	date={2012},
	eprint={0912.4213v3},
	note={Post-publication manuscript},
}

\bib{Freud-Equi}{article}{
   author={Diestel, Reinhard},
   author={K\"{u}hn, Daniela},
   title={Graph-theoretical versus topological ends of graphs},
   note={Dedicated to Crispin St. J. A. Nash-Williams},
   journal={J. Combin. Theory Ser. B},
   volume={87},
   date={2003},
   number={1},
   pages={197--206},
   issn={0095-8956},
   review={\MR{1967888}},
   doi={10.1016/S0095-8956(02)00034-5},
}

\bib{inf-cyc-1}{article}{
   author={Diestel, Reinhard},
   author={K\"{u}hn, Daniela},
   title={On infinite cycles I},
   journal={Combinatorica},
   volume={24},
   date={2004},
   number={1},
   pages={69--89},
   issn={1439-6912},
   review={\MR{2057684}},
   doi={10.1007/s00493-004-0005-z},
}

\bib{inf-cyc-2}{article}{
   author={Diestel, Reinhard},
   author={K\"{u}hn, Daniela},
   title={On infinite cycles II},
   journal={Combinatorica},
   volume={24},
   date={2004},
   number={1},
   pages={91--116},
   issn={1439-6912},
   review={\MR{2057685}},
   doi={10.1007/s00493-004-0006-y},
}

\bib{Freud}{article}{
   author={Freudenthal, Hans},
   title={\"{U}ber die Enden topologischer R\"{a}ume und Gruppen},
   language={German},
   journal={Math. Z.},
   volume={33},
   date={1931},
   number={1},
   pages={692--713},
   issn={0025-5874},
   review={\MR{1545233}},
   doi={10.1007/BF01174375},
}

\bib{paw-free}{article}{
   author={Goodman, S.},
   author={Hedetniemi, S.},
   title={Sufficient conditions for a graph to be Hamiltonian},
   journal={J. Combinatorial Theory Ser. B},
   volume={16},
   date={1974},
   pages={175--180},
   issn={0095-8956},
   review={\MR{357222}},
   doi={10.1016/0095-8956(74)90061-6},
}

\bib{Heuer.2015}{article}{
   author={Heuer, Karl},
   title={A sufficient condition for Hamiltonicity in locally finite graphs},
   journal={European J. Combin.},
   volume={45},
   date={2015},
   pages={97--114},
   issn={0195-6698},
   review={\MR{3286624}},
   doi={10.1016/j.ejc.2014.08.025},
}

\bib{Heuer.2016}{article}{
   author={Heuer, Karl},
   title={A sufficient local degree condition for Hamiltonicity in locally
   finite claw-free graphs},
   journal={European J. Combin.},
   volume={55},
   date={2016},
   pages={82--99},
   issn={0195-6698},
   review={\MR{3474794}},
   doi={10.1016/j.ejc.2016.01.003},
}

\bib{HC_sub_1}{article}{
   author={Heuer, Karl},
   author={Sarikaya, Deniz},
   title={Forcing Hamiltonicity in locally finite graphs via forbidden induced subgraphs I},
   date={2020},
   note={Preprint},
}

\end{biblist}
\end{bibdiv}

\end{document}